\begin{document}
\newtheorem{thm}{Theorem}[section]
\newtheorem{lem}[thm]{Lemma}
\newtheorem{prop}[thm]{Proposition}
\newtheorem{cor}[thm]{Corollary}
\theoremstyle{definition}
\newtheorem{ex}[thm]{Example}
\newtheorem{rem}[thm]{Remark}
\newtheorem{prob}[thm]{Problem}
\newtheorem{thmA}{Theorem}
\renewcommand{\thethmA}{}
\newtheorem{defi}[thm]{Definition}
\renewcommand{\thedefi}{}
\input amssym.def
\long\def\alert#1{\smallskip{\hskip\parindent\vrule%
\vbox{\advance\hsize-2\parindent\hrule\smallskip\parindent.4\parindent%
\narrower\noindent#1\smallskip\hrule}\vrule\hfill}\smallskip}
\def\ff{\frak}
\def\Spec{\mbox{\rm Spec}}
\def\type{\mbox{ type}}
\def\Hom{\mbox{ Hom}}
\def\rank{\mbox{ rank}}
\def\Ext{\mbox{ Ext}}
\def\Ker{\mbox{ Ker}}
\def\Max{\mbox{\rm Max}}
\def\End{\mbox{\rm End}}
\def\l{\langle\:}
\def\r{\:\rangle}
\def\Rad{\mbox{\rm Rad}}
\def\Zar{\mbox{\rm Zar}}
\def\Supp{\mbox{\rm Supp}}
\def\Rep{\mbox{\rm Rep}}
\def\cal{\mathcal}
\title[a non-commutative generalization of \L ukasiewicz rings]{a non-commutative generalization of \L ukasiewicz rings}
\thanks{2010 Mathematics Subject Classification.
06D35, 06E15, 06D50}
\thanks{\today}
\author{Albert Kadji, Celestin Lele, Jean B. Nganou}
\address{Department of Mathematics, University of Yaounde I,
Cameroon} \email{kadjialbert@yahoo.fr}
\address{Department of Mathematics and Computer Science, University of Dschang, Cameroon
} \email{celestinlele@yahoo.com}
\address{Department of Mathematics, University of Oregon, Eugene,
OR 97403} \email{nganou@uoregon.edu}
\begin{abstract} The goal of the present article is to extend the study of commutative rings whose ideals form an MV-algebra as carried out by Belluce and Di Nola \cite{BN} to non-commutative rings. We study and characterize all rings whose ideals form a pseudo MV-algebra, which shall be called here generalized \L ukasiewicz rings. We obtain that these are (up to isomorphism) exactly the direct sums of unitary special primary rings.
\vspace{0.20in}\\
{\noindent} Key words: MV-algebra, \L ukasiewicz ring, QF-ring, pseudo MV-algebra, semi-ring, special primary ring, Dubrovin valuation ring, Brown-McCoy radical, Jacobson radical.
\end{abstract}
\maketitle
\section{Introduction}
A ring $R$ is said to be generated by central idempotents, if for every $x\in R$, there exists a central idempotent element $e\in R$ such that $ex=x$. The (two-sided) ideals of such a ring $R$ form a residuated lattice $A(R):=\langle \text{Id}(R), \wedge, \vee, \odot, \to, \rightsquigarrow, \{0\}, R\rangle $, where $$I\wedge J = I\cap J,
I\vee J=I+J, I \odot J:=I\cdot J,$$
$$I \to J := \{x\in R: Ix\subseteq J \}, I\rightsquigarrow J :=
\{x\in R:xI\subseteq J \}.$$
\par Of these rings, Belluce and Di Nola \cite{BN} investigated the commutative rings $R$ for which $A(R)$ is an MV-algebra, which they called \L ukasiewicz rings. Recall that MV-algebras, which constitute the algebraic counterpart of \L ukasiewicz many value logic are categorically equivalent to Abelian $\ell$-groups with strong units \cite{CM}. An important non-commutative generalization of MV-algebra, known as pseudo MV-algebra was introduced by Georgescu and Iorgulescu \cite{g1, g}. These have been studied extensively (see for e.g., \cite{CK, ad, a, DP}).\par
The natural question that arises is what happens if one drops the commutativity assumption on \L ukasiewicz rings. One would expect the residuated lattice $A(R)$ above to become a pseudo MV-algebra. One embarks then on the study of  generalized \L ukasiewicz rings (referred to in the article as GLRs, for short), which are rings (both commutative and non-commutative) for which $A(R) $ is a pseudo MV-algebra. 
 \par
The main goal of this work is to completely characterize the rings $R$ for which $A(R)$ is a pseudo MV-algebra. From the onset, the requirements on these rings appear to be very restrictive. However, it is quite remarkable that this class includes some very important classes of rings such as left Artinian chain rings, some special factors of Dubrovin valuation rings, and matrix rings over \L ukasiewicz rings. \par
The paper is organized as follows. In section 2, we introduce and study generalized \L ukasiewicz semi-rings, which comprised the \L ukasiewicz semi-rings as studied in \cite{BN}. We show that these are dually equivalent to pseudo MV-algebras. In section 3, we introduce and study the main properties of generalized \L ukasiewicz rings. In particular, we show that they are closed under finite direct products, quotients by ideals, and direct sums. In section 4, we prove a representation theorem for generalized \L ukasiewicz rings. We obtain that (up to isomorphism), generalized \L ukasiewicz rings are direct sums of unitary special primary rings.\par
In the paper, when the term ideal is used, it shall refer to two-sided ideal.\par
A pseudo-MV algebra can be defined as an algebra  $A=\langle A, \oplus, {}^-,
{}^\sim, 0, 1 \rangle$  of type $(2, 1, 1, 0, 0)$ such that the
following axioms hold for all $x,y,z \in  A $ with an additional operation $x\odot y=(y^-\oplus x^-)^\sim$
\begin{itemize}
\item[(A1)] $(x\oplus y)\oplus z = x\oplus (y \oplus z) $;
\item[(A2)] $ x\oplus 0 = 0 \oplus x = x $;
\item[(A3)]$x\oplus 1 = 1 \oplus x = 1$;
\item[(A4)]$ {1}^\sim =0 ,  {1}^- = 0 $;
\item[(A5)] $( {x}^- \oplus {y}^-)^\sim = ( {x}^\sim \oplus {y}^\sim)^- $;
\item[(A6)] $ x \oplus ( {x}^\sim \odot y) = y \oplus ( {y}^\sim \odot x)=
(x \odot {y}^- ) \oplus y =  (  y \odot {x}^- ) \oplus x $;
\item[(A7)] $ x \odot ( {x}^-  \oplus   y) = (x \oplus  {y}^\sim) \odot y $;
\item[(A8)] $   {({x}^-)}^\sim =x .$
\end{itemize}
\par Every pseudo MV-algebra has an underline distributive lattice structure, where the order $\leq$ is defined by: $$x\leq y \; \; \text{ if and only if}\; \; x^-\oplus y=1.$$ Moreover, the infimum and supremum in this order are given by: 
\begin{center}
(i)\  $x \vee y=  x \oplus ( {x}^\sim \odot y) = y \oplus ( {y}^\sim
\odot x)= (x \odot {y}^- ) \oplus y =  (  y \odot {x}^- ) \oplus
x $,\\
(ii)\  $x \wedge y=  x \odot ( {x}^- \oplus y) = y \odot ( {y}^-
\oplus x)= (  x \oplus {y}^\sim ) \odot y =  (  y \oplus {x}^\sim )
\odot x $.
\end{center}
The prototype of pseudo MV-algebra can be constructed from an $\ell$-group as follows. Let $G$ be an $\ell$-group and $u$ a positive element in $G$, then $\langle \Gamma (G,u), \oplus, {}^-, {}^\sim, 0, u\rangle$, where
$$\Gamma(G,u):=\{x\in G:0\leq x\leq u\},$$
$$x\oplus y:=(x+y)\wedge u,\; \; \; \; \; \; x^\sim:=-x+u,$$
$$x\odot y=(x-u+y)\vee 0,\; \; \; \; \; \; x^-:=u-x,$$
is a pseudo MV-algebra.\par
In fact, a remarkable result due to Dvure\v{c}enskij \cite{a} asserts that every pseudo MV-algebra is isomorphic to a pseudo MV-algebra of the form $\langle \Gamma (G,u), \oplus, {}^-, {}^\sim, 0, u\rangle$.
Pseudo MV-algebras have a wealth of properties that will be used repeatedly without any explicit citation. Most of the properties used can be found in  \cite{CK, ad, a, DP}. 
\section{Semi-rings and pseudo MV-algebras}
In this section, we introduce generalized \L ukasiewicz semi-rings and study their connections to pseudo MV-algebras.
\begin{prop}
Let $A=\langle A, \oplus, \odot, {}^-, {}^\sim , 0, 1 \rangle$ be a pseudo MV-algebra and $S(A):=\langle A,+, \cdot, 0, 1 \rangle$. Then $S(A)$ is an additively idempotent semi-ring satisfying:\\
(i) $x\cdot y=0$ iff $y\leq {x}^-$ iff $x\leq {y}^\sim $;\\
(ii) $x+y= {({({x}^\sim \cdot y)}^\sim \cdot {x}^\sim)}^-
 = {( {x}^\sim
 \cdot {({y} \cdot {x}^-)}^\sim)}^-$;\\
 (iii)  ${({y}^\sim \cdot {x}^\sim)}^- = { ({y}^-\cdot {x}^-)}^\sim$;\\
 where $x+y=x\vee y$, $x\cdot y=x\odot y$, and $x\leq y$ iff ${x}^-\oplus y=1$.
 \end{prop}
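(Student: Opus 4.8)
The plan is to reduce everything to a short list of standard pseudo MV-algebra identities and then assemble the three parts by direct substitution. The identities I will lean on are the two De Morgan laws $(x\odot y)^-=y^-\oplus x^-$ and $(x\odot y)^\sim=y^\sim\oplus x^\sim$ (equivalently $x\oplus y=(y^\sim\odot x^\sim)^-=(y^-\odot x^-)^\sim$), the double-negation rules $(x^-)^\sim=(x^\sim)^-=x$, the boundary values $1^-=1^\sim=0$ and $0^-=0^\sim=1$, and the fact that both unary operations reverse the order $\leq$. Note that the two De Morgan laws themselves follow from the defining equation $x\odot y=(y^-\oplus x^-)^\sim$ together with axiom (A5), which supplies the alternate form $x\odot y=(y^\sim\oplus x^\sim)^-$; applying the appropriate negation to each form and cancelling by double negation yields the two laws.

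First I would check the semi-ring axioms. Because $+=\vee$ and $\leq$ endows $A$ with a distributive lattice structure whose least element is $0$, the reduct $(A,\vee,0)$ is at once a commutative monoid and additively idempotent, since $x\vee x=x$. That $(A,\odot,1)$ is a monoid is precisely the associativity of $\odot$ and the unit law $1\odot x=x\odot 1=x$, and the equalities $0\odot x=x\odot 0=0$ supply the absorbing zero. The only clause with genuine ring flavor is the two-sided distributivity $x\odot(y\vee z)=(x\odot y)\vee(x\odot z)$ and $(y\vee z)\odot x=(y\odot x)\vee(z\odot x)$, which is a standard identity for pseudo MV-algebras and may be quoted directly. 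This establishes that $S(A)$ is an additively idempotent semi-ring.

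For (i) I would begin with $x\odot y=(y^-\oplus x^-)^\sim$ and observe that $z^\sim=0$ iff $z=1$ (apply ${}^-$ and use $0^-=1$); hence $x\odot y=0$ iff $y^-\oplus x^-=1$, which by the very definition of $\leq$ (taking $a=y$, $b=x^-$ in $a^-\oplus b=1\iff a\leq b$) says exactly $y\leq x^-$. Applying the order-reversing map ${}^\sim$ and the rule $(x^-)^\sim=x$ turns $y\leq x^-$ into $x\leq y^\sim$, completing the chain of equivalences. For (ii) I would take the supremum formulas already displayed, $x\vee y=x\oplus(x^\sim\odot y)$ and $x\vee y=(y\odot x^-)\oplus x$, and rewrite each $\oplus$ through $a\oplus b=(b^\sim\odot a^\sim)^-$: the first gives $((x^\sim\odot y)^\sim\odot x^\sim)^-$ and the second gives $(x^\sim\odot(y\odot x^-)^\sim)^-$, which are precisely the two claimed expressions. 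Part (iii) is then immediate, since the De Morgan laws and double negation give $(y^\sim\odot x^\sim)^-=(x^\sim)^-\oplus(y^\sim)^-=x\oplus y=(x^-)^\sim\oplus(y^-)^\sim=(y^-\odot x^-)^\sim$.

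I do not expect a single hard step; the main obstacle is the disciplined bookkeeping of the two distinct negations ${}^-$ and ${}^\sim$. One must consistently apply whichever negation cancels on the relevant side, remember that each reverses $\leq$, and track how they convert $\oplus$ into $\odot$ and back under the two De Morgan laws. Once the correct form of each standard identity is pinned down, all three parts reduce to one- or two-line substitutions.
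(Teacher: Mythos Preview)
Your proposal is correct and follows the same approach as the paper, which merely states that the result ``follows easily from the main properties of pseudo MV-algebras.'' Your version is a careful, explicit expansion of precisely those standard identities (De Morgan laws, double negation, order reversal, and the join formulas from (A6)) that the paper leaves implicit.
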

 \begin{proof}
 Follows easily from the main properties of pseudo MV-algebras.  \end{proof}
The construction above can be reversed as we will now proceed to justify.\\
Let $S=\langle S, +,  \cdot, 0, 1 \rangle$ be an additively
idempotent semi-ring, i.e., $x+x=x$ for all $x\in S$.\\ Define the relation $\leq$ on $S$ by $x\leq y$ if and only if $x+y=y$.\\
 $S$ is called a  generalized $\L$ukasiewicz semi-ring if there exists maps ${}^-:S\to S$  and ${}^\sim:S\to S$  satisfying for all $x,y\in S$:\\
(i) $x\cdot y=0$ iff $y\leq  {x}^-$ iff $x\leq {y}^\sim $;\\
(ii) $ x + y=  {({({x}^\sim \cdot y)}^\sim \cdot {x}^\sim)}^- = {( {x}^\sim \cdot {({y} \cdot {x}^-)}^\sim)}^- $;\\
(iii)  ${({y}^\sim \cdot {x}^\sim)}^- = { ({y}^-\cdot {x}^-)}^\sim $.
 \begin{lem}\label{semi}
 Let $S=\langle S, +,  \cdot,^-, ^\sim, 0, 1 \rangle$ be a generalized \L ukasiewicz semi-ring and $\leq$ the relation defined above. Then each of the following properties holds for every $x, y\in S$.\\
 (i) The relation $\leq$ is an order relation on $S$ that is compatible with $+$ and $\cdot$;\\
 (ii)  $x^\sim \cdot x=x\cdot x^-=0$, $0^\sim=0^-=1$ and $1^\sim=1^-=0$.\\
 (iii) $x \leq y$ implies $y^\sim\leq x^\sim$ and $y^-\leq x^-$.\\
 (iv) $ {{x}^-}^\sim ={{x}^\sim}^-=x$.\\
 (v) $S$ is a lattice-ordered semi-ring, where $x\vee y=  x + y$ and $x\wedge y= {( {x}^- + {y}^-)}^\sim = {({x}^\sim +{y}^\sim )}^-$.\\
 (vi) ${( {x}^- + {y}^-)}^\sim = {({x}^\sim +{y}^\sim )}^-$.
 \end{lem}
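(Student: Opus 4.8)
The plan is to treat the six items in logical (not necessarily the listed) order, deriving everything from the three semi-ring axioms together with the standard semi-ring facts that $+$ is a commutative idempotent monoid with identity $0$, that $\cdot$ is associative with identity $1$, that multiplication distributes over $+$, and that $0$ is absorbing ($0\cdot x=x\cdot 0=0$). First I would dispose of (i) as pure semi-ring bookkeeping: reflexivity is idempotency $x+x=x$; antisymmetry follows because $x+y=y$ and $y+x=x$ force $x=y$ by commutativity of $+$; transitivity uses associativity. Compatibility with $+$ comes from $(x+z)+(y+z)=(x+y)+z=y+z$ when $x\le y$, and compatibility with $\cdot$ comes from distributivity, e.g. $xz+yz=(x+y)z=yz$. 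I would also record here the two facts used constantly later: $0$ is the least element (since $0+z=z$), and $1$ plays no special order role yet.

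Next I would establish (ii). The two annihilation identities are immediate from axiom (i) and reflexivity: $x\cdot x^-=0$ because $x^-\le x^-$, and $x^\sim\cdot x=0$ because $x^\sim\le x^\sim$. Setting $x=1$ and using that $1$ is the multiplicative identity gives $1^-=1\cdot 1^-=0$ and $1^\sim=1^\sim\cdot 1=0$. For the constants on $0$, axiom (iii) at $x=y=1$ collapses (using $1^\sim=1^-=0$ and absorption) to $0^-=0^\sim$. To pin down their common value, I would specialize axiom (ii) at $y=0$: using $x^\sim\cdot 0=0$, the first form reads $x=x+0=(0^\sim\cdot x^\sim)^-$ for all $x$; taking $x=1$ and $1^\sim=0$ yields $1=(0^\sim\cdot 0)^-=0^-$, hence $0^-=0^\sim=1$.

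With (ii) in hand, (iii) is short: if $x\le y$ then compatibility of $\cdot$ with $\le$ gives $x\cdot y^-\le y\cdot y^-=0$, and since $0$ is least, $x\cdot y^-=0$, so $y^-\le x^-$ by axiom (i); the symmetric computation $y^\sim\cdot x\le y^\sim\cdot y=0$ gives $y^\sim\le x^\sim$. I expect (iv) to be the main obstacle, precisely because the axioms are asymmetric: only ${}^-$-outer expressions appear in (ii), so one involution comes for free but the other must be manufactured. The first half I get directly from the displayed identity of the previous paragraph: now that $0^\sim=1$, $x=(0^\sim\cdot x^\sim)^-=(1\cdot x^\sim)^-=(x^\sim)^-$. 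For the second half I would feed $x=0$ into axiom (iii): since $0^\sim=0^-=1$, it reduces to $(y^\sim)^-=(y^-)^\sim$, and the left side equals $y$ by the identity just proved, so $(y^-)^\sim=y$. This is the one genuinely clever substitution in the argument.

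Finally, (v) and (vi) follow formally. From (iii) and (iv), the maps ${}^-$ and ${}^\sim$ are mutually inverse, order-reversing bijections, i.e. order anti-automorphisms of $(S,\le)$. Part (i) already shows $x+y$ is the join $x\vee y$ (it is an upper bound by idempotency and the least such by associativity). Since an order anti-automorphism interchanges joins and meets, the meet exists and $(x^-\vee y^-)^\sim=(x^-)^\sim\wedge(y^-)^\sim=x\wedge y$, giving $x\wedge y=(x^-+y^-)^\sim$; applying the same reasoning to the anti-automorphism ${}^-$ yields $x\wedge y=(x^\sim+y^\sim)^-$. The coincidence of these two expressions for $x\wedge y$ is exactly (vi), and together with the compatibility from (i) this shows $S$ is a lattice-ordered semi-ring, completing the proof.
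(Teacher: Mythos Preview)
Your argument is correct and follows the same overall skeleton as the paper's proof: parts (i), (iii), (v), (vi) are handled identically, and the annihilation and $1^-=1^\sim=0$ portions of (ii) match verbatim. The one substantive difference is in how the ``mirror'' identities are obtained. The paper first derives an auxiliary identity
\[
(ii)'\qquad x+y=\bigl((x^\sim\cdot y)^-\cdot x^-\bigr)^\sim=\bigl(x^-\cdot(y\cdot x^-)^-\bigr)^\sim
\]
by applying axiom (iii) to axiom (ii), and then uses $(ii)'$ (at $x=y=1$ and at $y=x$) to get $0^\sim=1$ and $(x^-)^\sim=x$. You instead bypass $(ii)'$ entirely: you get $0^-=0^\sim$ from axiom (iii) at $x=y=1$, and you get $(y^-)^\sim=(y^\sim)^-$ from axiom (iii) at $x=0$, which is a neat shortcut. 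Your route is marginally more economical for this lemma; the paper's $(ii)'$ has the advantage of being reusable (it is invoked again in the proof of the next proposition, where $A(S)$ is shown to be a pseudo MV-algebra).
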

 \begin{proof}
 (i) $\leq$ is clearly reflexive and anti-symmetric. In addition, if $x+y=y$ and $y+z=z$, then $x+z=x+y+z=y+z=z$. Thus $\leq$ is transitive. The compatibility of $\leq$ with $+$ and $\cdot$ is also easy to verify.\\
 For the rest of the properties, note that combining $(ii)$ and $(iii)$, one obtains that the following property holds in any generalized \L ukasiewicz semi-ring.
$$(ii)' \; \; \;  x + y= {({({x}^\sim \cdot y)}^- \cdot {x}^-)}^\sim = {( {x}^- \cdot {({y} \cdot {x}^-)}^-)}^\sim.$$
 (ii) Since $x^\sim \leq x^\sim $ and $x^-\leq x^-$, then it follows from (i) of the definition of a generalized \L ukasiewicz semi-ring that  $x^\sim \cdot x=x\cdot x^-=0$. In addition, $1^\sim=1^\sim\cdot 1=0$ and $1^-=1\cdot 1^-=0$. Also, $1=1+1= {({({1}^\sim \cdot 1)}^\sim \cdot {1}^\sim)}^-={({(0 \cdot 1)}^\sim \cdot 0})^-=(0^\sim\cdot 0)^-=0^-$. Similarly, it follows from $(ii)'$ that $0^\sim=1.$\\
 (iii) Suppose that $x\leq y$, then by (i) above, $x\cdot y^-\leq y\cdot y^-=0$ and $y^\sim\cdot x\leq y^\sim\cdot y=0$. Hence, $x\cdot y^-=y^\sim\cdot x=0$, and it follows from (i) of the definition of the generalized \L ukasiewicz semi-ring that $y^\sim\leq x^\sim$ and $y^-\leq x^-$.\\
 (iv) $x=x+x={({({x}^\sim \cdot x)}^\sim \cdot {x}^\sim)}^-=(0^\sim\cdot x^\sim)^-={{x}^\sim}^-$. Using $(ii)'$, a similar verification shows that $x={{x}^-}^\sim$.\\
 (v) It remains to show that with respect to the order $\leq$,  $x\vee y=  x + y$ and $x\wedge y= {( {x}^- + {y}^-)}^\sim = {({x}^\sim +{y}^\sim )}^-$.\\
It is clear that $x+y$ is an upper bound of $x$ and $y$. In addition, suppose that $x, y\leq u$, then $x+y\leq u+u=u$. Hence, $\sup (x,y)=x+y$. \\
We also have $x^-, y^-\leq x^-+y^-$, so $(x^-+y^-)^\sim \leq x, y$. In addition, suppose that $\ell \leq x, y$, then $x^-, y^-\leq \ell^-$, so $x^-+ y^-\leq \ell^-+\ell^-=\ell^-$. Thus, $\ell \leq (x^-+y^-)^\sim$ and $\inf (x, y)=(x^-+y^-)^\sim$. A similar argument shows that $\inf (x, y)={({x}^\sim +{y}^\sim )}^-$.\\
(vi) Clear from (v).
 \end{proof}
 \begin{prop}\label{AS}
 For every $S=\langle S, +,
\cdot,^-, ^\sim, 0, 1 \rangle$, a generalized \L ukasiewicz semi-ring, define $\oplus$ and $\odot$ by $$x\oplus y= {({y}^\sim \cdot {x}^\sim
)}^- \; \; \text{and}\; \; \; x\odot y=x\cdot y$$
Then, $A(S):=\langle S, \oplus,  \odot, {}^-,{}^\sim, 0, 1 \rangle$
is a pseudo MV-algebra.
\end{prop}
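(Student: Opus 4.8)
The plan is to verify the eight axioms (A1)--(A8) directly, exploiting the fact that property (iii) of a generalized \L ukasiewicz semi-ring lets us rewrite the new operation in two equivalent forms,
$$x \oplus y = (y^\sim \cdot x^\sim)^- = (y^- \cdot x^-)^\sim,$$
and that by Lemma~\ref{semi}(iv) the maps ${}^-$ and ${}^\sim$ are mutually inverse, so $(a^-)^\sim = (a^\sim)^- = a$, although neither is self-inverse. The central bookkeeping point throughout is to apply, at each step, the form of $\oplus$ (and of $x+y$) that cancels a negation cleanly rather than producing an uncontrolled double application of the same unary operation.

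Several axioms are immediate. Axiom (A8) is exactly Lemma~\ref{semi}(iv), and (A4) is part of Lemma~\ref{semi}(ii). For (A2) and (A3) I would use that $1$ is a two-sided identity and $0$ a two-sided annihilator for $\cdot$: for instance $x \oplus 0 = (0^\sim \cdot x^\sim)^- = (1\cdot x^\sim)^- = (x^\sim)^- = x$ and $x \oplus 1 = (1^\sim \cdot x^\sim)^- = (0\cdot x^\sim)^- = 0^- = 1$, with the symmetric computations for $0 \oplus x$ and $1 \oplus x$. For (A1), since $(x \oplus y)^\sim = ((y^\sim \cdot x^\sim)^-)^\sim = y^\sim \cdot x^\sim$, both $(x\oplus y)\oplus z$ and $x \oplus (y\oplus z)$ reduce to $(z^\sim \cdot y^\sim \cdot x^\sim)^-$ by associativity of $\cdot$. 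For (A5), the two forms of $\oplus$ give $x^- \oplus y^- = (y\cdot x)^-$ and $x^\sim \oplus y^\sim = (y\cdot x)^\sim$, whence both sides of the axiom equal $y\cdot x$.

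The substantive axioms are (A6) and (A7), which assert that the four displayed expressions compute the join and the two compute the meet of the underlying lattice. For (A6), I would recognize $x \oplus (x^\sim \odot y) = ((x^\sim\cdot y)^\sim \cdot x^\sim)^-$ and $(x\odot y^-)\oplus y = (y^\sim \cdot (x\cdot y^-)^\sim)^-$ as, respectively, the first form of the defining identity (ii) for $x+y$ and the second form of (ii) for $y+x$. Since $+$ is commutative (it is the join, by Lemma~\ref{semi}(v)) and the two remaining expressions arise by interchanging $x$ and $y$, all four coincide with $x + y$.

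Axiom (A7) is the \emph{main obstacle}, and here I would first manufacture two formulas for the meet from Lemma~\ref{semi}(v). Expanding $x^\sim + y^\sim$ in $x \wedge y = (x^\sim + y^\sim)^-$ via the identity (ii)$'$ (substituting $x^\sim,y^\sim$ and cancelling with $(x^\sim)^- = x$) yields $x \wedge y = x\cdot (y^\sim \cdot x)^-$; symmetrically, using $x\wedge y = y\wedge x = (y^- + x^-)^\sim$ and expanding by (ii) gives $x\wedge y = (y\cdot x^-)^\sim \cdot y$. On the other side, the two forms of $\oplus$ give $x^- \oplus y = (y^\sim \cdot x)^-$ and $x \oplus y^\sim = (y\cdot x^-)^\sim$, so that $x \odot (x^- \oplus y) = x\cdot (y^\sim\cdot x)^-$ and $(x\oplus y^\sim)\odot y = (y\cdot x^-)^\sim \cdot y$; matching these against the two meet formulas shows both sides of (A7) equal $x\wedge y$. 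The delicate aspect of both (A6) and (A7) is choosing, at each step, the representation of $\oplus$ and of $x+y$ that lets the mutually-inverse but not self-inverse negations telescope; once that choice is organized, the verifications become mechanical.
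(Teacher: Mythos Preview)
Your proof is correct and follows essentially the same route as the paper's. The only cosmetic difference is in the handling of (A7): you frame both sides as equal to $x\wedge y$ via Lemma~\ref{semi}(v), while the paper strings together the chain $x\cdot (y^\sim\cdot x)^- = (x^\sim+y^\sim)^- = (x^-+y^-)^\sim = (y^-+x^-)^\sim = (y\cdot x^-)^\sim\cdot y$ directly; since $(x^\sim+y^\sim)^- = x\wedge y$ is exactly Lemma~\ref{semi}(v), these are the same computation, and your careful remark about choosing the form of $\oplus$ that telescopes the negations is precisely what the paper does implicitly.
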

\begin{proof} Observe from Lemma \ref{semi} that $x\oplus y= {({y}^\sim \cdot {x}^\sim
)}^-=(y^-\cdot x^-)^\sim$.\\
(A1) $x\oplus (y\oplus z)=x\oplus ((z^\sim\cdot y^\sim)^-)=((z^\sim\cdot y^\sim)\cdot x^\sim)^-=(z^\sim\cdot (y^\sim \cdot x^\sim))^-=(y^\sim \cdot x^\sim)^-\oplus z=(x\oplus y)\oplus z$.\\
(A2), (A3), (A4): follow straight from Lemma \ref{semi}.\\
(A5) Since $x\oplus y= {({y}^\sim \cdot {x}^\sim)}^-=(y^-\cdot x^-)^\sim$, it follows that $(x^-\oplus y^-)^\sim={({x}^\sim \oplus{y}^\sim )}^-=y\cdot x$.\\
(A6) Note that from (ii) of the definition of generalized \L ukasiewicz semi-ring, $x+y=x\oplus (x^\sim \odot y)=(y\odot x^-)\oplus x$. The equalities to the remaining expressions follow from the fact that $+$ is commutative.\\
(A7) Note that $(x+y)^\sim={({x}^\sim \cdot y)}^\sim \cdot {x}^\sim$ and by $(ii)'$ of the proof of Lemma \ref{semi}, we also have $(x+y)^-=x^- \cdot (y \cdot x^-)^-$. Thus, $x\odot (x^-\oplus y)=x\cdot (y^\sim \cdot x)^-=(x^\sim+y^\sim)^-=(x^-+y^-)^\sim=(y^-+x^-)^\sim=(y\cdot x^-)^\sim \cdot y=(x\oplus y^\sim)\cdot y$.\\
(A8) Clear from Lemma \ref{semi} (iv).

  \end{proof}
   Le $S$  be a  generalized $\L$ukasiewicz semi-ring, an ideal of $S$ is a non-empty  subset $I$ closed under $+$ and such that $xy, yx\in I$, whenever $x \in  I $ and $y\in S $.   \begin{prop}\label{equiv-ideal}
 In a  generalized $\L$ukasiewicz semi-ring $S$, the following are
 equivalent:\\
  a) $I$ is and ideal of  $S$;\\
  b) $I$ is a non-empty  subset closed under $+$ and whenever $x \in  I $ and $y \leq  x $, then $y \in  I $ .
  \end{prop}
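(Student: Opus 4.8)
The plan is to prove the two implications separately. Since an ideal is by definition non-empty and closed under $+$, and condition (b) also requires exactly these two properties, the whole content lies in the interplay between the absorption property $xy,yx\in I$ and the downward-closure property $y\leq x\in I\Rightarrow y\in I$.

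For $(a)\Rightarrow(b)$, suppose $I$ is an ideal, $x\in I$ and $y\leq x$; I want $y\in I$. The point is that $y$ can be written as a one-sided multiple of $x$. Since $y\leq x$ we have $x\wedge y=y$, and by Lemma \ref{semi}(v), $x\wedge y=(x^\sim+y^\sim)^-$. But the computation already carried out in the verification of (A7) in Proposition \ref{AS} shows $x\cdot(y^\sim\cdot x)^-=(x^\sim+y^\sim)^-$. Hence $y=x\wedge y=x\cdot(y^\sim\cdot x)^-$, which is a product of $x\in I$ with the element $(y^\sim\cdot x)^-\in S$; as $I$ absorbs multiplication by arbitrary elements of $S$, we conclude $y\in I$.

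For $(b)\Rightarrow(a)$, suppose $I$ is non-empty, closed under $+$, and downward closed; I must show $xy,yx\in I$ for every $x\in I$ and $y\in S$. First note that $1$ is the top element of $S$: indeed $y\cdot 0=0$ for all $y$ and $0^\sim=1$ by Lemma \ref{semi}(ii), so condition (i) of the definition applied to $y\cdot 0=0$ gives $y\leq 0^\sim=1$. Using the compatibility of $\leq$ with $\cdot$ from Lemma \ref{semi}(i) together with $x\cdot 1=1\cdot x=x$, monotonicity yields $xy\leq x\cdot 1=x$ and $yx\leq 1\cdot x=x$. Since $x\in I$ and $I$ is downward closed, both $xy$ and $yx$ belong to $I$, so $I$ is an ideal.

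The only genuinely non-routine step is the identity used in $(a)\Rightarrow(b)$: recognizing that the lattice meet $x\wedge y$ equals the one-sided product $x\cdot(y^\sim\cdot x)^-$, so that membership of $y=x\wedge y$ in $I$ is inherited from the absorption property of the ideal. Everything in the converse reduces to the observations that $1$ is the largest element and that multiplication is order-preserving in each variable, where I expect no obstacle.
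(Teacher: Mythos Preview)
Your argument is correct and follows essentially the same approach as the paper: for $(a)\Rightarrow(b)$ both you and the paper exploit the identity $y=x\cdot(y^\sim\cdot x)^-$ when $y\leq x$, and for $(b)\Rightarrow(a)$ both use that $1$ is the top element to conclude $xy,yx\leq x$. The only differences are cosmetic---you invoke monotonicity from Lemma~\ref{semi}(i), while the paper writes the equivalent distributive step $x=x(y+1)=xy+x$ directly.
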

\begin{proof}
Assume that   $I$ is and ideal of  $S$ and  let  $x \in  I $ and $y\in S$ with $y
\leq x .$ \\
 Then  $ x
 \cdot {({y}^\sim \cdot x)}^- \in  I $ . But  $ x
 \cdot {({y}^\sim \cdot x)}^-  = {( {x}^- + {y}^-)}^\sim ={{y}^-}^\sim =y $. Thus, we obtain that  $y \in  I $.\\
  Conversely , let  $x \in  I $ and $y\in S $, we have  to show that  $xy \in  I $ and $yx \in  I $.
  Since $ 1= y +1  $ , we have $x= xy +x= yx+x $. So $xy \leq  x $
  and $yx \leq  x $ and we obtain that  $xy \in  I $ and $yx \in  I $.
   \end{proof}
  \begin{prop}
There is a natural duality between pseudo MV-algebras and
generalized $\L$ukasiewicz semi-rings.
\end{prop}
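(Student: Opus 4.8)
The plan is to show that the object assignments $A\mapsto S(A)$ and $S\mapsto A(S)$, furnished by the first Proposition of this section and by Proposition \ref{AS}, are mutually inverse and extend to mutually inverse functors between the category of pseudo MV-algebras and the category of generalized \L ukasiewicz semi-rings, thereby giving the asserted natural correspondence.

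First I would settle the morphisms. All the operations involved are interdefinable: in a pseudo MV-algebra one has $x\vee y=x\oplus(x^\sim\odot y)$, so $+$ and $\cdot$ are terms in $\oplus,\odot,{}^-,{}^\sim$; conversely $x\oplus y=(y^\sim\cdot x^\sim)^-$ exhibits $\oplus$ as a term in the semi-ring operations and the two involutions. Hence a map of underlying sets is a homomorphism of pseudo MV-algebras if and only if it is a homomorphism of the associated generalized \L ukasiewicz semi-rings, and both functors act as the identity on underlying sets and on maps. Once this is observed, functoriality is immediate and the naturality of the comparison isomorphisms is automatic, since their components are identity maps.

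The core of the argument is to verify that the two object constructions are inverse to one another. For $A(S(A))=A$, I would recompute $\oplus$ from $S(A)$: with $\cdot=\odot$, the recovered operation is $(y^\sim\odot x^\sim)^-$, and the pseudo MV De Morgan law $x\odot y=(y^-\oplus x^-)^\sim$ together with the involutivity $(x^-)^\sim=(x^\sim)^-=x$ gives $(y^\sim\odot x^\sim)^-=x\oplus y$; as $\odot,{}^-,{}^\sim,0,1$ are carried over unchanged, $A(S(A))=A$. For $S(A(S))=S$ the only nontrivial point is that the join recovered from $A(S)$ equals $+$. For this I would first check that the pseudo MV order of $A(S)$ coincides with the semi-ring order $\leq$ of $S$: indeed $x\leq_{A(S)}y$ means $x^-\oplus y=1$, i.e. $(y^\sim\cdot x)^-=1$, i.e. $y^\sim\cdot x=0$, which by condition (i) in the definition of a generalized \L ukasiewicz semi-ring and Lemma \ref{semi}(iv) is exactly $x\leq y$. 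Since the two orders agree and $x+y=\sup(x,y)$ by Lemma \ref{semi}(v), the join of $A(S)$ equals $+$; because $\odot=\cdot$ and the remaining data are unchanged, $S(A(S))=S$.

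I expect the two identity verifications of the previous paragraph to be the main obstacle, and within them the delicate steps are the recovery of $\oplus$ and the coincidence of the two orders, both of which hinge on the De Morgan-type identity and on the involutivity $(x^-)^\sim=(x^\sim)^-=x$. The remaining bookkeeping---checking that the homomorphisms match up and that the unit and counit are identities, hence natural---is routine. Assembling these facts yields mutually inverse functors and therefore the natural duality between pseudo MV-algebras and generalized \L ukasiewicz semi-rings.
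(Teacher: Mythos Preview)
Your proposal is correct and follows essentially the same strategy as the paper: both reduce the claim to verifying $A(S(A))=A$ and $S(A(S))=S$, and both handle $A(S(A))=A$ by recovering $\oplus$ from $\odot$ via the De~Morgan identity $(y^\sim\odot x^\sim)^-=x\oplus y$.

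There is one mild difference worth noting. For $S(A(S))=S$, the paper compares formulas directly: the pseudo MV-join in $A(S)$ is $x\vee y=x\oplus(x^\sim\odot y)=((x^\sim\cdot y)^\sim\cdot x^\sim)^-$, which is literally axiom~(ii) of the definition of a generalized \L ukasiewicz semi-ring, so $x\vee y=x+y$ on the nose. You instead show that the pseudo MV-order on $A(S)$ coincides with the semi-ring order and then invoke Lemma~\ref{semi}(v) to identify the joins. Both routes are short; the paper's is slightly more direct, while yours makes the role of the order explicit. You also supply the functorial bookkeeping (morphisms, identity unit/counit) that the paper leaves implicit; this is a welcome addition, since the paper's proof as written only checks the bijection on objects.
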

\begin{proof}
One needs to prove that $S(A(S))$ and $S$ are equal as generalized \L ukasiewicz semi-rings; and $A(S(A))$ and $A$ are equal as pseudo MV-algebras. Since the underline sets remain unchanged and so do the operations: multiplication, $^-$, and $^\sim$, one only needs to check that the additions coincide. Starting with a generalized \L ukasiewicz semi-ring $S=\langle S, +, \cdot,^-, ^\sim, 0, 1 \rangle$, define $x\oplus y= {({y}^\sim \cdot {x}^\sim)}^-$ and $x\odot y=x\cdot y$. One obtains a pseudo MV-algebra $A(S):=\langle S, \oplus,  \odot, {}^-,{}^\sim, 0, 1 \rangle$, which has a supremum given by $x\vee y=  x \oplus ( {x}^\sim \odot y) = y \oplus ( {y}^\sim \odot x)= (x \odot {y}^- ) \oplus y =  (  y \odot {x}^- ) \oplus x$. Now, from this pseudo MV-algebra, one constructs the generalized \L ukasiewicz semi-ring $S(A(S))$ whose addition is defined as the supremum. Therefore, one only needs to verify that $x\vee y=x+y$, which is clear from (ii) of the definition of a generalized \L ukasiewicz semi-ring. Now, starting with a speudo MV-algebra $A=\langle A, \oplus, \odot, {}^-, {}^\sim , 0, 1 \rangle$, one construct a generalized \L ukasiewicz semi-ring $S(A):=\langle A,+, \cdot, 0, 1 \rangle$, where $x+y=x\vee y$, the supremum of the pseudo MV-algebra and $x\cdot y=x\odot y$. From this generalized \L ukasiewicz semi-ring, one gets a pseudo MV-algebra $A(S(A))$, with $x\oplus' y=(y^\sim\odot x^\sim)^-=(y^-\odot x^-)^\sim$. Therefore, one needs to check that $x\oplus' y=x\oplus y$, that is $x\oplus y=(y^\sim\odot x^\sim)^-$, which is a known property of pseudo MV-algebras.\\
\end{proof}
\section{generalized $\L$ukasiewicz rings}
For a ring $R$ generated by central idempotents, let $Sem(R)=\langle \text{Id}(R), +, \cdot, 0, R\rangle$, where $\text{Id}(R)$ denotes the set of (two-sided) ideals of $R$. It is easily verified that  $Sem(R)=\langle \text{Id}(R), +, \cdot, 0, R\rangle$ is a semi-ring, where $+$ and $\cdot$ are the sum and product of ideals respectively. Define $${}^-,{}^\sim:\text{Id}(R)\to \text{Id}(R)\; \; \text{by}\; \; 
I^-= \{x\in R: Ix=0\}\; \; \text{and}\; \;  I^\sim=\{x\in R: xI=0\}$$
\begin{prop} Given any ring $R$ and $I, J$ ideals of $R$.\\
(i) $I\cdot J=0$ iff $J\subseteq {I}^-$ iff $I\subseteq {J}^\sim $.\\
(ii) $I\subseteq J$ implies $J^-\subseteq I^-$ and $J^\sim\subseteq I^\sim$.\\
(iii) $(I+ J)^-= I^-\cap J^- $;  $(I+ J)^\sim= I^\sim\cap J^\sim $.
$I\subseteq {{I}^\sim}^-$, $I\subseteq {{I}^-}^\sim$.\\
(iv) $I+J \subseteq  {({({I}^\sim \cdot J)}^\sim \cdot {I}^\sim)}^-$ and $I+J\subseteq {( {I}^\sim
 \cdot {({J} \cdot {I}^-)}^\sim)}^-$.
 \\
(v) If $R$ is generated by central idempotents, then $R^-=R^\sim=0.$
\end{prop}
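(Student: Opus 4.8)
The plan is to read $I^-=\{x:Ix=0\}$ and $I^\sim=\{x:xI=0\}$ as the right and left annihilators of $I$, and to reduce every assertion to the single bookkeeping fact recorded in part~(i): for ideals $X,Y$ one has $X\cdot Y=0$ exactly when $Y\subseteq X^-$, equivalently $X\subseteq Y^\sim$. Part~(i) itself is immediate: $X\cdot Y=0$ says every product $ab$ with $a\in X$, $b\in Y$ vanishes (a finite sum of such products is $0$ iff each summand is), and this is literally $Xb=0$ for all $b\in Y$, i.e. $Y\subseteq X^-$, and symmetrically $aY=0$ for all $a\in X$, i.e. $X\subseteq Y^\sim$. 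At the outset I would record the two degenerate instances $I\cdot I^-=0$ and $I^\sim\cdot I=0$, and note that $I^-$ and $I^\sim$ (hence all the ideals built from them below) are again two-sided ideals, since these are essentially the only facts used.

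For~(ii), if $I\subseteq J$ and $x$ annihilates $J$ on the relevant side then it annihilates the smaller $I$ as well, giving $J^-\subseteq I^-$ and $J^\sim\subseteq I^\sim$ at once. For the first two equalities of~(iii) I would use $(I+J)x=Ix+Jx$: this sum of additive subgroups is $0$ iff $Ix=0$ and $Jx=0$, so $x\in(I+J)^-$ iff $x\in I^-\cap J^-$, and dually for $\sim$. The two double-annihilation containments then follow directly from~(i): $I^\sim\cdot I=0$ says $I\subseteq(I^\sim)^-$, and $I\cdot I^-=0$ says $I\subseteq(I^-)^\sim$.

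The substance is~(iv), and here the reduction through~(i) does the real work. Because $(\cdot)^-$ is a right annihilator, $I+J\subseteq X^-$ is equivalent to $X\cdot(I+J)=0$, i.e. to $X\cdot I=0$ and $X\cdot J=0$ together. For the first displayed bound take $X=(I^\sim J)^\sim\cdot I^\sim$; then, purely by associativity,
\[
X\cdot I=(I^\sim J)^\sim\cdot(I^\sim\cdot I)=0,\qquad X\cdot J=(I^\sim J)^\sim\cdot(I^\sim J)=0,
\]
the first because $I^\sim\cdot I=0$ and the second because $(I^\sim J)^\sim$ is by definition the left annihilator of $I^\sim J$. Hence $X\subseteq(I+J)^\sim$ and $I+J\subseteq X^-$. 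The second displayed bound is to be treated as the genuine left--right mirror of the first, obtained by interchanging the two annihilators $(\cdot)^-\leftrightarrow(\cdot)^\sim$ and reversing the order of multiplication (equivalently, arguing in the opposite ring); there the corresponding vanishings one needs are $I\cdot I^-=0$ and $(JI^-)\cdot(JI^-)^-=0$. I expect the only real subtlety in~(iv) to be exactly this left--right bookkeeping: for each bound one must choose the annihilator and the side so that the two defining products genuinely collapse, and verify the second bound as an honest opposite-ring dual rather than a naive symbol-swap, since the mirror taken on the wrong side need not hold for a general ring.

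Finally, for~(v) suppose $R$ is generated by central idempotents and let $x\in R^-$, so $Rx=0$. Choosing a central idempotent $e$ with $ex=x$ gives $x=ex\in Rx=0$, whence $R^-=0$; the same argument, now using $xe=ex=x$, shows that $x\in R^\sim$ forces $xe\in xR=0$ and hence $x=0$, so $R^\sim=0$ as well.
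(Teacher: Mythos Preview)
Your treatment of (i)--(iii), (v), and the first inclusion in~(iv) is correct and is essentially the paper's argument, recast at the level of ideals rather than elements.

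The gap is in the second inclusion of~(iv). Your own recipe---swap ${}^-\leftrightarrow{}^\sim$ and reverse all products---turns $\bigl((I^\sim J)^\sim\cdot I^\sim\bigr)^-$ into $\bigl(I^-\cdot(JI^-)^-\bigr)^\sim$, \emph{not} into the displayed $\bigl(I^\sim\cdot(JI^-)^\sim\bigr)^-$. The vanishings you list, $I\cdot I^-=0$ and $(JI^-)\cdot(JI^-)^-=0$, do prove $(I+J)\cdot\bigl(I^-\cdot(JI^-)^-\bigr)=0$ and hence $I+J\subseteq\bigl(I^-\cdot(JI^-)^-\bigr)^\sim$, the genuine opposite-ring dual; but they say nothing about $Y=I^\sim\cdot(JI^-)^\sim$, where one would need $Y\cdot I=0$ and $Y\cdot J=0$, and neither product collapses by associativity. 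In fact the second inclusion as stated is false for general rings: take $R=T_2(k)$, the upper-triangular $2\times2$ matrices over a field, with $I=ke_{11}+ke_{12}$ and $J=ke_{12}+ke_{22}$. Then $I^-=0$, so $(JI^-)^\sim=R$ and $\bigl(I^\sim\cdot(JI^-)^\sim\bigr)^-=(I^\sim)^-=J^-=I\subsetneq R=I+J$. Thus neither your argument nor the paper's ``similar to the above'' can establish the second inclusion as written; what the dual argument actually yields is $I+J\subseteq\bigl(I^-\cdot(JI^-)^-\bigr)^\sim$, which is precisely the companion formula $(ii)'$ used later in the proof of Lemma~\ref{semi}.
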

\begin{proof}Let $I, J$ be ideals of $R$.\\
(i)  Follows clearly from the definitions of $\cdot, ^-, ^\sim$.\\
(ii)  Straightforward.\\
(iii)  Since $I, J\subseteq I+J$, it follows that $Ix, Jx \subseteq
(I+J)x$. Thus, $(I+J)x=0$ implies $Ix, Jx=0$ and $(I+ J)^-\supseteq I^-\cap J^- $. In addition, if $Ix=Jx=0$, then $(I+J)x=0$, so $(I+
J)^-\subseteq I^-\cap J^- $. Hence, $(I+ J)^-= I^-\cap J^- $.\\
Similarly, we show that $(I+ J)^\sim= I^\sim\cap J^\sim $.  The inclusions $I\subseteq {({I}^\sim)}^-$ and $I\subseteq {{I}^-}^\sim$ follow from the definitions.\\
(iv) Let  $u \in  {I}^\sim $,   $v \in {({I}^\sim \cdot J)}^\sim $
and $y \in  J $,  we have  $uy \in {I}^\sim \cdot J $ and  $0=
v(uy)=  (vu)y $ and we obtain that $vu  \in  {J}^\sim $. So  $vu \in
 I^\sim\cap J^\sim =(I+ J)^\sim $.
 Since a typical element in  ${({I}^\sim \cdot J)}^\sim \cdot {I}^\sim $ is  a sum of elements of the type $vu $, we obtain that
${({I}^\sim \cdot J)}^\sim \cdot {I}^\sim \subseteq (I+ J)^\sim $
and conclude that $I+J  \subseteq {({I +J})^\sim}^- \subseteq
{({({I}^\sim \cdot J)}^\sim \cdot {I}^\sim)}^- $. The proof that $I+J\subseteq {( {I}^\sim \cdot {({J} \cdot {I}^-)}^\sim)}^-$ is similar to the above.\\
(v) \  Let $x\in R^-$, then $Rx=0$. But since $R$ is generated by central idempotents, there exists $e\in R$ such that $ex=x$. Thus $x\in Rx=0$ and $x=0$.
\end{proof}
\begin{defi}
A ring $R$ is called a generalized $\L$ukasiewicz ring (GLR) if it is generated by central idempotents and for all ideals $I, J$ of $R$, \\
(GLR-1) $I + J=  {({({I}^\sim \cdot J)}^\sim \cdot {I}^\sim)}^- = {({I}^\sim \cdot {({J} \cdot {I}^-)}^\sim)}^- $;\\
(GLR-2) ${({J}^\sim \cdot {I}^\sim)}^- = { ({J}^-\cdot {I}^-)}^\sim$.
\end{defi}
Note that it follows from the definition that if $R$ is a GLR, then $Sem(R)$ is generalized \L ukasiewicz semi-ring. Therefore, by Proposition \ref{AS}, $A(Sem(R))$ is a pseudo MV-algebra, which for simplicity, will be denoted throughout the rest of the paper by $A(R)$. More specifically, $A(R)=\langle \text{Id}(R), \oplus, \odot,^-,
^\sim,0,1\rangle$ is a pseudo MV-algebra, where
$$I \oplus J= {({J}^\sim \cdot {I}^\sim)}^- = { ({J}^-\cdot
{I}^-)}^\sim,$$
 $$I\odot J = I\cdot J= \left\{\sum a_{i}b_{i}:
a_{i}\in I, b_{i}\in J\right\},$$ 
$$I^-= \{x\in R: Ix=0\}, I^\sim=\{x\in
R: xI=0\},$$
$$0=\{0\}, 1=R.$$
We know that the underline lattice $\langle \text{Id}(R), \vee, \wedge, 0, 1\rangle$ of $A(R)$ is distributive, where 
 $I\vee J =  {({({I}^\sim \cdot J)}^\sim \cdot {I}^\sim)}^- = {({I}^\sim \cdot {({J} \cdot {I}^-)}^\sim)}^- = I+J$, $I\wedge J = I\cap J$. \\
Moreover, $\langle \text{Id}(R), \vee, \wedge, 0, 1\rangle$ is a complete lattice. The supremum is given by the sum of ideals and the infimum by the intersection.\\
Since the operation $\oplus$ distributes over $\vee$ in any pseudo MV-algebra, then in $A(R)$ the following identity holds.
 $$I \oplus( J+ K)= I\oplus J + I\oplus K$$
 We have the following crucial lemma.
\begin{lem}\label{mv}
For every generalized \L ukasiewicz ring $R$ the pseudo MV-algebra $A(R)$ is commutative, that is $A(R)$ is an MV-algebra.
\end{lem}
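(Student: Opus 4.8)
The plan is to prove that the two negations of the pseudo MV-algebra $A(R)$ coincide, i.e. that $I^-=I^\sim$ for every ideal $I$; since a pseudo MV-algebra is an MV-algebra exactly when $x^-=x^\sim$ holds identically, equivalently when $\oplus$ is commutative, this is precisely the assertion of the lemma. First I would reduce this to a single family of ring-theoretic identities. By definition $I^\sim\subseteq I^-$ says that every $x$ with $xI=0$ also satisfies $Ix=0$, i.e. $I\cdot I^\sim=0$; conversely, once $I\cdot I^\sim=0$ is known for all ideals, replacing $I$ by $I^-$ and using $(I^-)^\sim=I$ (Lemma \ref{semi}(iv)) gives $I^-\cdot I=0$, that is $I^-\subseteq I^\sim$. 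Hence the lemma is equivalent to the statement that $I\cdot I^\sim=0$ for every ideal $I$. Each such ideal is automatically square-zero, since $(I\cdot I^\sim)^2=I(I^\sim I)I^\sim=0$ because $I^\sim I=0$, so the genuine content is that these particular square-zero ideals vanish.

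The engine for killing them is completeness. The lattice $\langle \mathrm{Id}(R),+,\cap\rangle$ underlying $A(R)$ is complete, so $A(R)$ is a complete pseudo MV-algebra. I would then invoke Dvure\v{c}enskij's representation \cite{a}: $A(R)\cong \Gamma(G,u)$ for some $\ell$-group $G$ with strong unit $u$. Completeness of the interval $\Gamma(G,u)$ forces $G$ to be Dedekind complete, hence archimedean; and by the classical theorem of Birkhoff that every archimedean $\ell$-group is abelian, $G$ is commutative. Consequently $\Gamma(G,u)$, and therefore $A(R)$, is an MV-algebra, as claimed. Equivalently, one may cite directly the known fact that every $\sigma$-complete pseudo MV-algebra is an MV-algebra, which is exactly this argument packaged as a lemma.

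The step I expect to demand the most care is transferring lattice-completeness of $A(R)$ to the archimedean property of $G$: one reduces suprema of bounded families in $G$ to suprema inside $[0,u]=\Gamma(G,u)$ by means of the strong unit, and then argues that if $na\le b$ held for all $n$ with $a>0$, the supremum $s=\sup_n na$ would satisfy $a+s=\sup_n(a+na)=s$, forcing $a=0$, a contradiction. An alternative and entirely elementary route would bypass $\ell$-groups and attempt to prove $I\cdot I^\sim=0$ straight from (GLR-1)--(GLR-2); the difficulty there is that the annihilator identities such as $(I\cdot I^\sim)^-=(I^{--}\cdot I^-)^\sim$ only determine these products up to the left/right ambiguity inherent in the two negations, and it is precisely completeness (through archimedeanness) that removes that ambiguity.
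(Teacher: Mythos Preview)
Your argument is correct and takes essentially the same approach as the paper: completeness of the ideal lattice makes $A(R)$ a complete pseudo MV-algebra, hence Archimedean (the paper cites \cite[Proposition~6.4.14]{DP}), hence commutative (the paper cites \cite[Theorem~4.2]{a}), which you unpack via the $\ell$-group representation and Birkhoff's theorem. Your opening ring-theoretic reduction to $I\cdot I^\sim=0$ is correct but superfluous, since the completeness argument already delivers $I^-=I^\sim$ directly.
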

\begin{proof}
Note that as observed above, $A(R)$ is a  complete pseudo MV-algebra, by \cite[Proposition 6.4.14]{DP}, $A(R)$ is Archimedean. In addition, every Archimedean pseudo MV-algebra is commutative \cite[Theorem 4.2]{a}, that is an MV-algebra. Therefore, $A(R)$ is an MV-algebra.
\end{proof}
From Lemma \ref{mv}, the following properties clearly hold in every generalized \L ukasiewicz ring.
\begin{itemize}
\item[(AN)] For every ideal $I$ of $R$, $I^-=I^\sim$, which we shall denote simply by $I^\ast$. 
\item[(CO)] For all ideals $I, J$ of $R$, $I\cdot J=J\cdot I$. 
\item[(LR)] For all ideals $I, J$ of $R$, $I+J=(I^\ast\cdot (I^\ast\cdot J)^\ast)^\ast$
\end{itemize}
Indeed, it is also easy to see that a ring that is generated by central idempotents and satisfying, (AN), (CO), (LR) is a GLR.
\begin{prop} A ring is a GLR if and only if it is generated by central idempotents and satisfies, (AN), (CO), and (LR).
\end{prop}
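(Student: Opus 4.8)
The plan is to prove both directions of the equivalence, most of which has already been set up in the preceding discussion. The forward direction is essentially free: if $R$ is a GLR, then by Lemma \ref{mv} the pseudo MV-algebra $A(R)$ is actually an MV-algebra, and the three displayed properties (AN), (CO), (LR) were already derived from this commutativity in the remarks immediately following Lemma \ref{mv}. So the first thing I would do is simply record that the forward implication is precisely the content of those remarks: commutativity of $A(R)$ gives $I^- = I^\sim$ (since $x^- = x^\sim$ holds in any MV-algebra), $I \cdot J = J \cdot I$ (commutativity of $\odot$), and (LR) is just (GLR-1) rewritten using $I^- = I^\sim$.

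The substance is the converse. Here I would assume $R$ is generated by central idempotents and satisfies (AN), (CO), (LR), and aim to verify (GLR-1) and (GLR-2), which are exactly the conditions making $Sem(R)$ a generalized \L ukasiewicz semi-ring (the additive-idempotence and semi-ring axioms being automatic for $Sem(R)$). Using (AN) to replace every occurrence of $^\sim$ by $^\ast$ (equivalently $^-$), condition (GLR-2) collapses to the tautology $(J^\ast \cdot I^\ast)^\ast = (J^\ast \cdot I^\ast)^\ast$, so it holds trivially. For (GLR-1), the first equality $I + J = (I^\ast \cdot (I^\ast \cdot J)^\ast)^\ast$ is literally (LR) after substituting $^\sim = ^\ast$. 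The second equality $I + J = (I^\ast \cdot (J \cdot I^\ast)^\ast)^\ast$ then follows from the first by applying (CO) to commute the product $J \cdot I^\ast = I^\ast \cdot J$ inside the expression.

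I would then conclude that all the defining conditions of a GLR are met, so $R$ is a GLR. The main thing to be careful about is the bookkeeping of the two annihilator operations: the whole point is that (AN) lets one identify $^-$ and $^\sim$, after which (GLR-2) becomes vacuous and (GLR-1) reduces to (LR) together with a single application of (CO). I do not expect any genuine obstacle here, since the real work was already done in establishing Lemma \ref{mv}; this proposition is a clean repackaging of that result into an intrinsic ring-theoretic characterization. The only minor point worth stating explicitly is why (CO) suffices to pass between the two forms in (GLR-1), namely that $I^\ast$ is itself an ideal and products of ideals commute under (CO).
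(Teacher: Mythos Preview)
Your proposal is correct and follows the same line as the paper, which merely records that the forward direction is the content of the remarks after Lemma \ref{mv} and asserts that the converse ``is also easy to see'' without further detail. One minor bookkeeping slip: after substituting ${}^\sim = {}^- = {}^\ast$, the first expression in (GLR-1) reads $((I^\ast \cdot J)^\ast \cdot I^\ast)^\ast$, not $(I^\ast \cdot (I^\ast \cdot J)^\ast)^\ast$, so that equality is not \emph{literally} (LR) but (LR) together with one application of (CO) to the outer product---exactly parallel to what you do for the second equality.
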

Since the identity $x^{\ast\ast}=x$ holds in every MV-algebra, we have the following.
\begin{prop}\label{inv}
For every ideal $I$ of a generalized \L ukasiewicz ring $R$, $I^{\ast\ast}=I$.
\end{prop}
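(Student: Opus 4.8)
The plan is to reduce the statement to the involutivity of negation in the associated MV-algebra, so that essentially nothing beyond the structural facts already assembled is needed. First I would invoke Lemma \ref{mv}: the pseudo MV-algebra $A(R)$ is in fact an MV-algebra, hence commutative, and property (AN) lets me identify the two raw negations, writing $I^\ast := I^- = I^\sim$ for each ideal $I$. This identification is the only preparatory step, and it is precisely what makes the single starred operation $\ast$ well defined.

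With $I^\ast = I^- = I^\sim$ in hand, the conclusion is immediate from axiom (A8). Indeed, (A8) asserts $(x^-)^\sim = x$ in every pseudo MV-algebra; applied inside $A(R)$ this reads $(I^-)^\sim = I$. Substituting $I^- = I^\ast$ and then $(I^\ast)^\sim = (I^\ast)^\ast$ via (AN) gives $I^{\ast\ast} = (I^-)^\sim = I$, as required. Equivalently, I could quote Lemma \ref{semi}(iv), which already records ${x^-}^\sim = {x^\sim}^- = x$ at the semi-ring level and hence applies verbatim to $Sem(R)$; the translation between the $A(\,\cdot\,)$ and $Sem(\,\cdot\,)$ viewpoints is the content of the duality established earlier, so either route closes the argument.

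I expect no genuine obstacle. The single point that warrants a moment of care is the bookkeeping around the negations: the involution one has at hand in a general pseudo MV-algebra is the \emph{mixed} composite $(I^-)^\sim$ (and $(I^\sim)^-$), not the iterated $(I^-)^-$ or $(I^\sim)^\sim$, which need not return $I$ without commutativity. It is exactly the passage through (AN), collapsing $-$ and $\sim$ into the one symbol $\ast$, that turns the mixed involution $(I^-)^\sim = I$ into the clean statement $I^{\ast\ast} = I$; so one must apply (AN) before, not after, invoking (A8).
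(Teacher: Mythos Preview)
Your proposal is correct and follows essentially the same route as the paper: once Lemma \ref{mv} establishes that $A(R)$ is an MV-algebra, the involutivity of negation $x^{\ast\ast}=x$ is a standard identity, which you simply unpack via (A8) together with (AN). The paper's own proof is the one-line remark that this identity holds in every MV-algebra.
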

It follows that every GLR is a dual ring. Details on dual rings can be found in \cite{HN, kap}.
Note that commutative GLRs are exactly the \L ukasiewicz rings as treated in \cite{BN}. In addition to these, we present an example of a non-commutative GLR.
\begin{ex}\label{mainex}
Let $R$ be any unitary \L ukasiewicz ring and $n\geq 1$ be any integer. Then the ring $M_n(R)$ of $n\times n$ matrices over $R$ is a GLR. To see this, first we recall for any unitary ring $R$ (see for e.g \cite[Theorem 3.1]{tyl}), the ideals of $M_n(R)$ are of the form $M_n(I)$, where $I$ is an ideal of $R$. In addition, a simple verification reveals the following axioms for any ring $R$.
$$M_n(I)^\sim=M_n(I^\sim), M_n(I+J)=M_n(I)+M_n(J)$$$$M_n(I)^-=M_n(I^-), M_n(I\cdot J)=M_n(I)\cdot M_n(J)$$
It is therefore clear that if $R$ is a unitary \L ukasiewicz ring (or even a unitary GLR), that $M_n(R)$ is generated by central idempotents (is indeed unitary) and satisfies (AN), (CO), and (LR).
\end{ex}
We would like to describe the relationship between ideals of $R$ and those of $Sem(R)$, when $R$ is a GLR. For the remainder of this section, $R$ will denote a GLR and $S$ its associated semi-ring, that is $S=Sem(R)$. Note that since $R$ is generated by central idempotents, for every $x\in R$, $RxR:=\{\sum_{i=1}^n r_ixs_i: n\geq 1, r_i, s_i\in R\}$ is the ideal of $R$ generated by $x$.\par
For every ideal $I$ of $S$, we define
$$S(I):=\{J\in \text{Id}(R): J\subseteq Rx_{1}R + Rx_{2}R+...+Rx_{n}R, \; \text{for some}\;   x_{1},...,x_{n}\in I\}$$
Then, by Proposition \ref{equiv-ideal}, it is straightforward that $S(I)$ is an ideal of $S$ and
$$S(I)=\left\{\sum_{i=1}^n J_ix_iL_i:n\geq 1, J_i, L_i\in \text{Id}(R), x_i\in I\right\} $$
Indeed, $S(I)$ is the ideal of $S$ generated by $X:=\{RxR\in \text{Id}(R):x\in I\}$.\\
One should also observe that if $I$ is proper, so is $S(I)$. Indeed, if $R\in S(I)$, then there are $x_{1},
x_{2},...,x_{n}\in I$ such that $R\subseteq Rx_{1}R +
Rx_{2}R+...+Rx_{n}R\subseteq I$  and so $I=R$.\par
To reverse the construction above, we define
$S^{-1}(\textbf{I}):= \{x\in R : RxR \in \textbf{I} \}$,
for each ideal $\textbf{I}$ of $S$.
\begin{prop}\label{AS3} (i)For each ideal $\textbf{I}$ of $S$, $S^{-1}(\textbf{I})$ is an ideal of $R$.\\
(ii) For each ideal $\textbf{I}$ of $S$, $S(S^{-1}(\textbf{I}))\subseteq \textbf{I}$.\\
(iii)\ If $\textbf{I}\subseteq \textbf{J}$, then  $
S^{-1}(\textbf{I})\subseteq S^{-1}(\textbf{J})$.
\end{prop}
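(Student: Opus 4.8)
The plan is to lean throughout on the characterization of ideals of $S$ furnished by Proposition \ref{equiv-ideal}: a subset $\textbf{I}\subseteq\text{Id}(R)$ is an ideal of the semi-ring $S$ precisely when it is non-empty, closed under $+$, and downward closed for the order $\leq$ of $S$. Since $+$ in $S$ is the sum of ideals, this order is just inclusion, i.e. $J\leq K$ iff $J\subseteq K$, and I will use this identification silently. I will also use repeatedly that, because $R$ is generated by central idempotents, for each $x\in R$ there is a central idempotent $e$ with $ex=xe=x$, so that $rx=rxe$ and $xr=exr$ both lie in $RxR$, and $RxR$ is the ideal of $R$ generated by $x$.

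For (i), I would first observe that $S^{-1}(\textbf{I})$ is non-empty: the zero ideal $\{0\}=R0R$ is the bottom element of $S$, hence lies in $\textbf{I}$ by downward closure, so $0\in S^{-1}(\textbf{I})$. For closure under addition, take $x,y\in S^{-1}(\textbf{I})$, so that $RxR,RyR\in\textbf{I}$; then $RxR+RyR\in\textbf{I}$ because $\textbf{I}$ is closed under $+$, and the inclusion $R(x+y)R\subseteq RxR+RyR$ together with downward closure gives $R(x+y)R\in\textbf{I}$, i.e. $x+y\in S^{-1}(\textbf{I})$. For closure under two-sided multiplication, take $x\in S^{-1}(\textbf{I})$ and $r\in R$; the central-idempotent remark shows $rx,xr\in RxR$, whence $R(rx)R,\,R(xr)R\subseteq RxR\in\textbf{I}$, and downward closure again yields $rx,xr\in S^{-1}(\textbf{I})$.

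For (ii), recall that $S(S^{-1}(\textbf{I}))$ is the ideal of $S$ generated by $\{RxR:x\in S^{-1}(\textbf{I})\}$, and that by the very definition of $S^{-1}$ this generating set equals $\{RxR:RxR\in\textbf{I}\}$, which is contained in $\textbf{I}$. Concretely, any $J\in S(S^{-1}(\textbf{I}))$ satisfies $J\subseteq Rx_1R+\cdots+Rx_nR$ for some $x_1,\dots,x_n\in S^{-1}(\textbf{I})$; each $Rx_iR\in\textbf{I}$, so the finite sum lies in $\textbf{I}$, and downward closure forces $J\in\textbf{I}$. Part (iii) is immediate: if $x\in S^{-1}(\textbf{I})$ then $RxR\in\textbf{I}\subseteq\textbf{J}$, so $x\in S^{-1}(\textbf{J})$.

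The computations are all routine; the only place demanding genuine input is the two-sided multiplicative closure in (i), where one must invoke the hypothesis that $R$ is generated by central idempotents to place $rx$ and $xr$ inside $RxR$ (for a general ring lacking a unit one cannot expect $rx\in RxR$). Everything else reduces to the facts that $\leq$ is inclusion and that ideals of $S$ are closed under $+$ and downward closed, as recorded in Proposition \ref{equiv-ideal}.
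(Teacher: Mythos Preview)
Your proof is correct and follows essentially the same approach as the paper: both argue via the downward-closure characterization of semi-ring ideals (Proposition \ref{equiv-ideal}), reducing each closure property of $S^{-1}(\textbf{I})$ to an appropriate inclusion of principal ideals. The only cosmetic difference is in the multiplicative step of (i): the paper uses the inclusion $RxyR\subseteq RxR\cdot RyR$ together with the fact that $\textbf{I}$ absorbs products in $S$, whereas you observe directly that $rx,xr\in RxR$ via the central-idempotent hypothesis and invoke downward closure; both arguments are valid and rely on the same hypothesis.
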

\begin{proof}
(i)\ Assume that $\textbf{I}$ is an ideal of $S$ and let $I =
S^{-1}(\textbf{I})$. Let $x,y \in I$, we have $RxR \in \textbf{I}$
and $RyR \in \textbf{I}$. Since $\textbf{I}\in Id(S)$, we have $RxR+
RyR \in \textbf{I}$.  From the fact that  $R(x+ y)R \subseteq RxR+
RyR$, we deduce that $R(x+ y)R \in \textbf{I}$ and $x+y\in I$.\par In addition, let
$x\in I$ and $y\in R$.   Since  $\textbf{I}\in Id(S)$, $RxR \in
\textbf{I}$ and $RyR \in \text{Id}(R) $, it follows that
 $RxR\cdot RyR \in \textbf{I}$.  From this and the fact that $RxyR
\subseteq RxR\cdot RyR$, we have $RxyR \in \textbf{I}$ as $\textbf{I}$ is an ideal of $S$. That is  $xy \in I$. A similar argument shows that $yx\in I$. Thus, $S^{-1}(\textbf{I})$ is an
ideal of $R$.\\
(ii) Let $J\in S(S^{-1}(\textbf{I}))$, then $J\subseteq Rx_{1}R+Rx_{2}R+...+Rx_{n}R $, for  some
$x_{1},...,x_{n}\in S^{-1}(\textbf{I})$. But $x_{i}\in S^{-1}(\textbf{I})$ means that $Rx_{i}R \in \textbf{I}$ and then,
$J\subseteq Rx_{1}R+Rx_{2}R+...+Rx_{n}R \in \textbf{I}$. Hence $J
\in \textbf{I}$.\\
(iii) Clear.
\end{proof}
\begin{prop}\label{AS4}  For every ideal $I$ of $R$,  $I = S^{-1}(S(I))$.
\end{prop}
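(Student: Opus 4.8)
The plan is to prove the set equality $I = S^{-1}(S(I))$ by establishing the two inclusions separately, simply unwinding the definitions of the two constructions $S(\cdot)$ and $S^{-1}(\cdot)$. The one fact I will lean on throughout is the observation recorded just before the definition of $S(I)$: since $R$ is generated by central idempotents, $RxR$ is the ideal of $R$ generated by $x$, and in particular $x \in RxR$ for every $x \in R$. This is the crux of the whole argument.

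For the inclusion $I \subseteq S^{-1}(S(I))$, I would take $x \in I$ and observe that $RxR \subseteq RxR$ with $x \in I$, so by the very definition of $S(I)$ (taking $n = 1$ and $x_1 = x$) we have $RxR \in S(I)$. By the definition of $S^{-1}$, this says precisely that $x \in S^{-1}(S(I))$.

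For the reverse inclusion $S^{-1}(S(I)) \subseteq I$, I would take $x \in S^{-1}(S(I))$, so that $RxR \in S(I)$. Unwinding the definition of $S(I)$ yields elements $x_1, \dots, x_n \in I$ with $RxR \subseteq Rx_1R + Rx_2R + \cdots + Rx_nR$. Now I invoke $x \in RxR$ to conclude $x \in Rx_1R + \cdots + Rx_nR$. Finally, since each $x_i$ lies in the ideal $I$ of $R$, every $Rx_iR$ is contained in $I$, hence so is their finite sum; therefore $x \in I$, as desired.

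The only point requiring care is the step $x \in RxR$ in the reverse inclusion: without the standing hypothesis that $R$ is generated by central idempotents one could not guarantee that the principal two-sided ideal generated by $x$ actually contains $x$. Here it is available because, choosing a central idempotent $e$ with $ex = x$, one has $x = exe \in RxR$. Everything else is a direct translation between the two definitions (together with the already-established fact, Proposition \ref{AS3}, that $S(I)$ and $S^{-1}$ produce genuine ideals), so I expect no serious obstacle beyond this observation.
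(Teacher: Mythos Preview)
Your proof is correct and follows essentially the same argument as the paper: both directions are handled by unwinding the definitions, using that $x\in RxR$ (available because $R$ is generated by central idempotents) and that each $Rx_iR\subseteq I$. Your explicit remark justifying $x\in RxR$ via a central idempotent $e$ with $ex=x$ makes explicit what the paper uses tacitly.
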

\begin{proof}
\ Assume that  $I$ is an ideal of $R$ and $x\in I$. It is clear
that $RxR\in S(I)$ and then $x\in S^{-1}(S(I))$. Conversely, let
$x\in S^{-1}(S(I))$. Thus $RxR\subseteq Rx_{1}R+Rx_{2}R+...+Rx_{n}R$,
for some $x_{1},...,x_{n}\in I$. In particular, since each $Rx_iR\subseteq I$, then $x\in RxR\subseteq I$, and $x\in I$. Thus, $I = S^{-1}(S(I))$.\\
\end{proof}
For the next result, $\text{FG}(R)$ denotes the set of finitely generated ideals of $R$.
\begin{prop}\label{AS5} For each ideal $\textbf{I}$ of $S$,\\
(i)\  $\textbf{I}\cap \text{FG}(R) \subseteq S(S^{-1}(\textbf{I}))$.\\
(ii)\ If every ideal of $\textbf{I}$ is finitely generated, then $\textbf{I}=
S(S^{-1}(\textbf{I}))$.
\end{prop}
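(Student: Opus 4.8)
The plan is to prove (i) first and then obtain (ii) as an immediate consequence, since the reverse containment $S(S^{-1}(\textbf{I}))\subseteq \textbf{I}$ is already available from Proposition \ref{AS3}(ii). For part (i), I would take an arbitrary $J\in \textbf{I}\cap \text{FG}(R)$, so that $J$ is a finitely generated ideal of $R$ that belongs to $\textbf{I}$. Choosing generators $x_1,\dots,x_n$ of $J$ and using the fact (recalled just before Proposition \ref{AS3}) that $RxR$ is the principal ideal generated by $x$ whenever $R$ is generated by central idempotents, one writes $J=Rx_1R+\cdots+Rx_nR$. The crucial observation is that each principal ideal satisfies $Rx_iR\subseteq J$, because $x_i\in J$ and $J$ is an ideal of $R$. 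Now I invoke Proposition \ref{equiv-ideal}: an ideal $\textbf{I}$ of the semi-ring $S$ is precisely a nonempty subset closed under $+$ that is downward closed for the order $\leq$, which in $S=Sem(R)$ is just inclusion of ideals. Since $J\in \textbf{I}$ and $Rx_iR\subseteq J$, downward closure forces $Rx_iR\in \textbf{I}$, that is $x_i\in S^{-1}(\textbf{I})$ for every $i$. Then $J=Rx_1R+\cdots+Rx_nR$ with all $x_i\in S^{-1}(\textbf{I})$, so by the very definition of $S(\cdot)$ we conclude $J\in S(S^{-1}(\textbf{I}))$, which establishes (i).

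For part (ii), the hypothesis that every ideal belonging to $\textbf{I}$ is finitely generated says exactly that $\textbf{I}\subseteq \text{FG}(R)$, and hence $\textbf{I}=\textbf{I}\cap \text{FG}(R)$. Part (i) then yields $\textbf{I}\subseteq S(S^{-1}(\textbf{I}))$, and combining this with the containment $S(S^{-1}(\textbf{I}))\subseteq \textbf{I}$ furnished by Proposition \ref{AS3}(ii) gives the desired equality $\textbf{I}=S(S^{-1}(\textbf{I}))$.

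I do not anticipate a genuine obstacle in this argument; the only points demanding care are the passage from a finitely generated two-sided ideal to the sum of the principal ideals of its generators, and the explicit appeal to the downward-closure characterization of semi-ring ideals in Proposition \ref{equiv-ideal}. It is worth noting that finite generation is essential: for an $\textbf{I}$-member $J$ that is not finitely generated, the displayed description $J=Rx_1R+\cdots+Rx_nR$ fails, and membership of $J$ in $S(S^{-1}(\textbf{I}))$ cannot be inferred, since that set only captures ideals dominated by a \emph{finite} sum of principal ideals drawn from $S^{-1}(\textbf{I})$. This is precisely why the statement is phrased with $\text{FG}(R)$ in (i) and with the finite-generation hypothesis in (ii).
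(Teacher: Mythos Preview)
Your proof is correct and follows essentially the same route as the paper's: write a finitely generated $J\in\textbf{I}$ as $Rx_1R+\cdots+Rx_nR$, use downward closure of $\textbf{I}$ (via Proposition~\ref{equiv-ideal}) to get each $Rx_iR\in\textbf{I}$ and hence $x_i\in S^{-1}(\textbf{I})$, and then read off $J\in S(S^{-1}(\textbf{I}))$; part (ii) follows by combining (i) with Proposition~\ref{AS3}(ii). The only cosmetic difference is that you cite Proposition~\ref{equiv-ideal} explicitly for the downward-closure step, whereas the paper appeals directly to $\textbf{I}$ being an ideal of $S$.
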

\begin{proof}
 (i)\ Suppose $J \in \textbf{I}$ and $J= Rx_{1}R+Rx_{2}R+...+Rx_{n}R$,
 for  some $x_{1},...,x_{n}\in J$. Since $Rx_iR\subseteq J$ for all $i$, and $J\in \textbf{I}$, which is an ideal of $S$, then $Rx_iR\in  \textbf{I}$ for all $i$. That is $x_i\in S^{-1}(\textbf{I})$ for all $i$, and $J\in S(S^{-1}(\textbf{I}))$. Hence,  $\textbf{I}\cap \text{FG}(R) \subseteq S(S^{-1}(\textbf{I}))$ as needed.\\
(ii) By assumption, $\textbf{I}\subseteq FG(R)$, hence $\textbf{I}=\textbf{I}\cap \text{FG}(R) \subseteq S(S^{-1}(\textbf{I}))$. The equality is obtained by combining the above with Proposition \ref{AS3}(ii).
\end{proof}
Note all ideals of Noetherian rings are finitely generated. Therefore, if $R$ is Noetherian, then $\textbf{I}=S(S^{-1}(\textbf{I}))$. Thus,
 there is a one-to-one correspondence between the ideals of $R$ of those of $S$.\\
Given an ideal $I$ of $R$, we consider the ring $R/I$. Then ideals of  $R/I$ are of the form $J/I := \{x/I: x\in J\}$ where $J$ is an ideal
of $R$ such that $I\subseteq J$. For ideals $J,K$ of $R$ such that
$I\subseteq J$ and $I\subseteq K$, we have $J/I+ K/I= (J+K)/I$
and $(J/I)\cdot (K/I)= (J\cdot K)/I$.
\begin{prop}\label{qneed2}Let $R$ be any ring, and $I,J$ be
ideals of $R$ such that $I\subseteq J$. Then,
\begin{itemize}
\item[(i)]  $I\subseteq (I^\sim \cdot J)^-$ and $I\subseteq (J\cdot I^-)^\sim$.
\item[(ii)]  $(J/I)^-= (I^\sim \cdot J)^-/I$ and $(J/I)^\sim= (J\cdot I^-)^\sim/I$.
\item[(iii)] If $R$ is a GLR, then $(J/I)^\ast=(I^\ast\cdot J)^\ast/I$.
\end{itemize}
\end{prop}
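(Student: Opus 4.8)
The plan is to pin down each of the three ideals appearing in the statement as an explicit subset of $R$ (or of $R/I$) and then compare them directly. Throughout I would lean on two facts that are immediate from the definitions: $I$ is a two-sided ideal, and $I^\sim I=0$, $II^-=0$.

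For (i) I would argue on generators. To obtain $I\subseteq(I^\sim\cdot J)^-$, fix $a\in I$ and a typical generator $uv$ of $I^\sim\cdot J$ with $u\in I^\sim$, $v\in J$; then $(uv)a=u(va)$, and since $I$ is two-sided we have $va\in I$, whence $u(va)=0$ because $u\in I^\sim$. Summing over generators gives $(I^\sim\cdot J)a=0$, i.e. $a\in(I^\sim\cdot J)^-$. The inclusion $I\subseteq(J\cdot I^-)^\sim$ is the left--right mirror: for $a\in I$ and a generator $vw$ with $v\in J$, $w\in I^-$, one has $a(vw)=(av)w=0$ since $av\in I$ and $w\in I^-$. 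No hypothesis beyond ``$R$ is a ring'' is used here, and (i) is exactly what makes the quotients in (ii) well defined.

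For (ii) the key step is to realise both sides as quotients of transparent ideals of $R$. Computing the annihilator inside $R/I$ shows that $\overline y\in(J/I)^-$ iff $Jy\subseteq I$, so $(J/I)^-=L/I$ with $L:=\{y\in R:Jy\subseteq I\}$ (and $I\subseteq L$ because $I$ is an ideal). Unwinding the definition on the other side yields $(I^\sim\cdot J)^-=\{y:Jy\subseteq(I^\sim)^-\}$, an ideal containing $I$ by part (i). Since $I\subseteq(I^\sim)^-$, the inclusion $(J/I)^-\subseteq(I^\sim\cdot J)^-/I$ is automatic; the second equality $(J/I)^\sim=(J\cdot I^-)^\sim/I$ comes out of the same bookkeeping with left and right multiplication interchanged.

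I expect the reverse inclusion in (ii) to be the only real obstacle: it asks that $Jy\subseteq(I^\sim)^-$ already force $Jy\subseteq I$, which requires $(I^\sim)^-$ to be no larger than $I$ --- something that can fail for a general ring and must be supplied by extra structure. This is precisely where biduality enters: since $R$ is a generalized \L ukasiewicz ring, Proposition \ref{inv} gives $(I^\sim)^-=(I^\ast)^\ast=I^{\ast\ast}=I$, which collapses $\{y:Jy\subseteq(I^\sim)^-\}$ onto $L$ and closes the equality. Part (iii) then costs nothing: in a GLR one has ${}^-={}^\sim={}^\ast$, so $(J/I)^\ast=(J/I)^-$ and $(I^\ast\cdot J)^\ast=(I^\sim\cdot J)^-$, and the asserted identity $(J/I)^\ast=(I^\ast\cdot J)^\ast/I$ is just the first equality of (ii) rewritten in the $^\ast$-notation.
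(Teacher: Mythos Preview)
Your argument for (i) is correct; the paper takes a marginally slicker route, using $I^\sim\cdot J\subseteq I^\sim$ together with the order-reversal of ${}^-$ to obtain $I\subseteq(I^\sim)^-\subseteq(I^\sim\cdot J)^-$ in one line, but the content is the same.

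For (ii) your caution is well placed and in fact sharper than the paper. The paper asserts (ii) for an arbitrary ring and writes a chain of equalities, one link of which is $\{x/I:Jx\subseteq I\}=\{x/I:I^\sim\cdot Jx=0\}$; the right-to-left direction of that link is precisely the implication $Jx\subseteq(I^\sim)^-\Rightarrow Jx\subseteq I$, which you correctly flag as requiring $(I^\sim)^-=I$. This fails in general: with $R=\mathbb{Z}$, $I=4\mathbb{Z}$, $J=2\mathbb{Z}$ one has $I^\sim=0$, hence $(I^\sim\cdot J)^-/I=R/I$, while $(J/I)^-=2\mathbb{Z}/4\mathbb{Z}$. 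So only the inclusion $(J/I)^-\subseteq(I^\sim\cdot J)^-/I$ survives for arbitrary rings, and your appeal to Proposition~\ref{inv} in the GLR case is exactly what is needed to upgrade it to an equality. Since the proposition is only applied downstream (in Theorem~\ref{oper}(2)) under the GLR hypothesis, where your derivation of (iii) goes through cleanly, nothing in the paper is affected; you have simply located and repaired a gap in the stated generality of (ii).
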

\begin{proof}
(i) \ We have $ I^\sim \cdot J\subseteq I^\sim$ and then  $I\subseteq 
{{I}^\sim}^-\subseteq (I^\sim \cdot J)^-$.\\ Similarly, $(J\cdot I^-)^\sim\subseteq I^-$ and then $I\subseteq 
{{I}^-}^\sim\subseteq (J\cdot I^-)^\sim$.\\
(ii)We have $$ \begin{aligned}
(J/I)^-&= \{x/I \in R/I: (y/I)(x/I)= 0 \ \text{for  all}\; \;   y\in J \}\\
&=\{x/I \in R/I: yx\in I \ \text{for  all}\; \;  y\in J \}\\ 
&=\{x/I \in R/I: Jx\subseteq I \}\\ 
&=\{x/I \in R/I: I^\sim \cdot Jx\subseteq I^\sim\cdot I=0 \}\\ 
&=\{x/I \in R/I: I^\sim \cdot Jx=0 \} \\ 
&=\{x/I \in R/I: x\in (I^\sim \cdot J)^- \}\\
&= (I^\sim \cdot J)^-/I
\end{aligned}  
$$
A similar argument shows that $(J/I)^\sim= (J\cdot I^-)^\sim/I$.\\
(iii) Since $A(R)$ is a GLR, then the conclusion holds by (ii) and (AN).
\end{proof}
The next result shows that GLRs are closed under finite products, arbitrary direct sums, and quotients.
\begin{thm}\label{oper}
 \begin{itemize}
 \item[(1)] Any finite direct product of GLRs is again a GLR.
 \item[(2)] Any quotient of a GLR by a proper ideal is again a GLR.
 \item[(3)] Any direct sum of GLRs is again a GLR.
  \end{itemize}
\end{thm}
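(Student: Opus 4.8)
The plan is to use the characterization proved just above the theorem, namely that a ring is a GLR exactly when it is generated by central idempotents and satisfies (AN), (CO) and (LR); so for each of the three constructions I must verify these four conditions. The recurring theme for (1) and (3) is that, because each factor is generated by central idempotents, every ideal of the product (resp. sum) decomposes \emph{componentwise}, after which (AN), (CO), (LR) hold factorwise; the recurring theme for (2) is to transport the conditions through the quotient formulas of Proposition \ref{qneed2}.

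For (1), write $R=R_1\times\cdots\times R_n$ with each $R_i$ a GLR. Given $x=(x_1,\dots,x_n)$, choosing central idempotents $e_i\in R_i$ with $e_ix_i=x_i$ yields the central idempotent $e=(e_1,\dots,e_n)$ with $ex=x$, so $R$ is generated by central idempotents. Next I would prove the key structural fact: every ideal $I$ of $R$ has the form $I=I_1\times\cdots\times I_n$ with each $I_i$ an ideal of $R_i$. The point is that multiplying $(a_1,\dots,a_n)\in I$ by the central idempotent equal to $e_i$ in coordinate $i$ (where $e_ia_i=a_i$) and $0$ elsewhere keeps only the $i$-th coordinate, which therefore lies in $I$. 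Granting this, one checks that $+$, $\cdot$, ${}^-$ and ${}^\sim$ all act componentwise, e.g. $I^-=I_1^-\times\cdots\times I_n^-$ and $I\cdot J=(I_1J_1)\times\cdots\times(I_nJ_n)$; then (AN), (CO), (LR) for $R$ follow at once from the same identities in each $R_i$. Part (3) is identical in spirit: for $R=\bigoplus_\lambda R_\lambda$ the only extra care is the finite-support condition, and the same central-idempotent argument gives $I=\bigoplus_\lambda I_\lambda$ with $I^-=\bigoplus_\lambda I_\lambda^-$, $I\cdot J=\bigoplus_\lambda I_\lambda J_\lambda$, so (AN), (CO), (LR) again reduce to the factorwise statements. (This also explains the restriction to \emph{finite} products: for infinite products ideals need not be products, since the direct sum already sits inside.)

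For (2), let $R$ be a GLR and $I$ a proper ideal. That $R/I$ is generated by central idempotents is immediate, since the image of a central idempotent of $R$ is one of $R/I$. Condition (CO) is also immediate, $(J/I)(K/I)=(JK)/I=(KJ)/I=(K/I)(J/I)$ by (CO) in $R$; and (AN) for $R/I$ is exactly Proposition \ref{qneed2}(iii), whose common value I write $(J/I)^\ast=(I^\ast\cdot J)^\ast/I$. The real content is (LR), which I expect to be the main obstacle, and I would handle it by a carefully organized computation. Writing $A_1=(I^\ast J)^\ast$, the quotient-star formula together with the product rule $(M/I)(N/I)=(MN+I)/I$ expands the right-hand side $\big((J/I)^\ast\cdot((J/I)^\ast\cdot(K/I))^\ast\big)^\ast$ into $(I^\ast\cdot C)^\ast/I$ for an explicit $C$ built from $A_1$ and $K$. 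Two simplifications make this collapse. First, $I\cdot I^\ast=I^\ast\cdot I=0$ (from the definitions of ${}^-,{}^\sim$ and (AN)) annihilates all the spurious ``$+\,I$'' summands, reducing the right-hand side to $\big(I^\ast A_1\,(I^\ast A_1K)^\ast\big)^\ast/I$. Second, setting $P=I^\ast A_1$, one application of (LR) in $R$ (with $P$ in the role of $X^\ast$) gives $\big(P(PK)^\ast\big)^\ast=P^\ast+K$, and a further application of (LR) together with $I^{\ast\ast}=I$ (Proposition \ref{inv}) gives $P^\ast=(I^\ast(I^\ast J)^\ast)^\ast=I+J=J$, the last equality because $I\subseteq J$. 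Hence the right-hand side equals $(J+K)/I=J/I+K/I$, which is (LR) for $R/I$.

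Thus the only genuinely delicate step is the (LR) verification for quotients; once $I I^\ast=0$ is used to discard the lower-order terms, the nested structure of the right-hand side is precisely two superposed copies of the (LR) relation of $R$, and everything collapses to $(J+K)/I$. Parts (1) and (3) are then routine componentwise bookkeeping resting on the decomposition of ideals, which is where ``generated by central idempotents'' is used in an essential way.
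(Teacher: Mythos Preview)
Your proposal is correct and follows essentially the same route as the paper: both use the characterization of GLRs via ``generated by central idempotents'' together with the conditions (AN), (CO), (LR), reduce (1) and (3) to the componentwise decomposition of ideals, and handle (2) via the quotient-annihilator formula of Proposition~\ref{qneed2}. The paper's proof simply cites \cite{BN} for the (LR) verifications, whereas you actually carry out the nested computation for (LR) in the quotient case; your two-step collapse $(P(PK)^\ast)^\ast=P^\ast+K$ and $P^\ast=(I^\ast(I^\ast J)^\ast)^\ast=I+J=J$ is precisely the content hidden behind that citation, so the two arguments are substantively identical, yours being the more detailed execution.
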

\begin{proof} First, it is clear that finite direct products, quotients, direct sums of rings generated by central idempotents and rings generated by central idempotents.\\
(1) Suppose that $R=\prod^n_{i=1}R_i$, with each $R_i$ a GLR, then $A(R_i)$ is an MV-algebra by Lemma \ref{mv}. On the other hand, every ideal of $R$ has the form $I=\prod^n_{i=1}I_i$, where $I_i$ is an ideal of $R_i$. Now, it is clear that (AN), (CO) hold, and the proof of $(LR)$ is the same as in \cite{BN}.\\
(2) Suppose that $R$ is a GLR and $I$ is an ideal of $R$, then since $R$ satisfies (AN), (CO), it follows from Proposition \ref{qneed2} that $R/I$ satisfies (AN), (CO). The proof that $R/I$ satisfies (LR) is the same as that of \cite[Proposition 3.9]{BN} given that (iii) of Proposition \ref{qneed2} holds.\\
(3) As above, this proof can be adapted from that of \cite[Proposition 3.12]{BN} using the conditions (AN), (CO), (LR).
\end{proof}
\begin{rem}
An infinite (direct) product of GLRs needs not be a GLR. Indeed, consider $R=\prod_{n=1}^\infty F$, where $F$ is a field. We claim that $R$ is not a GLR. To see this, consider $I=\{(x_n):x_{2n}=0\}$, $J=\oplus_{n=1}^\infty F$ and $K=\{(x_n):x_{2n+1}=0\}$, which are all ideals of $R$. One can verify that $$I^\ast=K, (I+J)^\ast=0, {({I}^\ast \cdot J)}^\ast \cdot {I}^\ast=K$$
Thus, $(I+J)^\ast\ne {({I}^\ast \cdot J)}^\ast \cdot {I}^\ast$ and (LR) fails.
\end{rem}
\begin{prop}
(i)\ For all $I, J, K \in \text{Id}(R)$, $$I\cap (J+K)= I\cap J +
I\cap K.$$
(ii)\ If $\{J_{i}\}_{i\in T}$ is a family of ideals of  $R$, then
$$I+\bigcap_{i\in T}J_{i}= \bigcap_{i\in T}(I+J_{i}).$$
\end{prop}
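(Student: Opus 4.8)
The plan is to settle (i) directly from the distributivity of the ideal lattice and to reduce the infinite identity (ii) to an infinite \emph{frame} law which---unlike the coframe law---is cheap to prove because sums of ideals are finitary. For (i), I would invoke the fact, recalled in the text, that $\langle \text{Id}(R), +, \cap, 0, 1\rangle$ is a distributive lattice with join $I \vee J = I + J$ and meet $I \wedge J = I \cap J$; the distributive law then reads verbatim $I \cap (J + K) = (I \cap J) + (I \cap K)$, which is exactly (i). An immediate induction yields the finite frame law $I \cap (J_1 + \cdots + J_n) = (I \cap J_1) + \cdots + (I \cap J_n)$, which I will use below.

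For (ii), the inclusion $I + \bigcap_{i} J_i \subseteq \bigcap_{i} (I + J_i)$ is trivial, so the content is the reverse inclusion, namely the genuinely infinite coframe law. I would not attack this head-on, since an element $x \in \bigcap_i (I + J_i)$ only affords decompositions $x = a_i + b_i$ (with $a_i \in I$, $b_i \in J_i$) that vary with $i$; instead I would pass to the $\ast$-dual. By (AN) the operators ${}^{-}$ and ${}^{\sim}$ coincide with $\ast$, and by Proposition \ref{inv} one has $I^{\ast\ast} = I$, so $\ast$ is an order-reversing involution. By the same finitary argument that gives the identity $(I+J)^- = I^- \cap J^-$ recalled earlier, one obtains $\left(\sum_{i} I_i\right)^\ast = \bigcap_{i} I_i^\ast$; applying $\ast$ and using $I^{\ast\ast} = I$ then yields the dual law $\left(\bigcap_{i} I_i\right)^\ast = \sum_{i} I_i^\ast$. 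Thus $\ast$ interchanges arbitrary sums with arbitrary intersections.

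With this in hand I would establish the infinite frame law
$$A \cap \sum_{i\in T} B_i = \sum_{i\in T} (A \cap B_i).$$
The inclusion $\supseteq$ is immediate; for $\subseteq$, any $x \in A \cap \sum_{i} B_i$ lies in some finite partial sum $B_{i_1} + \cdots + B_{i_n}$, so the finite frame law from (i) gives $x \in A \cap (B_{i_1} + \cdots + B_{i_n}) = (A \cap B_{i_1}) + \cdots + (A \cap B_{i_n}) \subseteq \sum_{i} (A \cap B_i)$. Applying $\ast$ to this identity with $A = I^\ast$ and $B_i = J_i^\ast$, and invoking the two De Morgan laws together with $I^{\ast\ast} = I$, turns the left-hand side into $I + \bigcap_{i} J_i$ and the right-hand side into $\bigcap_{i} (I + J_i)$, which is precisely (ii).

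The main obstacle is the infinite identity (ii), and its crux is recognizing it as the $\ast$-dual of the infinite frame law: the frame law succumbs to finite distributivity precisely because every element of a sum of ideals involves only finitely many summands, whereas the coframe law resists a direct elementwise argument. The finitary frame law and the De Morgan behaviour of $\ast$ are the two ingredients that make the reduction go through.
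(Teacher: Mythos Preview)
Your proof of (i) is identical to the paper's. For (ii) your argument is correct but genuinely different from the paper's. The paper simply observes that $A(R)$ is a \emph{complete} MV-algebra and invokes the known infinite distributive law $x\vee\bigwedge_i y_i=\bigwedge_i(x\vee y_i)$ valid in any complete MV-algebra; this is a one-line citation of an external structural fact. You instead work inside the ideal lattice: you first establish the infinite frame law $A\cap\sum_i B_i=\sum_i(A\cap B_i)$ by exploiting that ideal sums are finitary (every element of $\sum_i B_i$ lives in a finite subsum, so finite distributivity from (i) applies), and then you dualize through the order-reversing involution $\ast$ to obtain the coframe law. Your route is longer but more elementary and self-contained: it uses only Proposition~\ref{inv} ($I^{\ast\ast}=I$), the elementwise De~Morgan identity $(\sum_i I_i)^\ast=\bigcap_i I_i^\ast$, and finite distributivity, rather than appealing to the general theory of complete MV-algebras. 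The paper's approach, by contrast, buys brevity at the cost of importing a nontrivial fact about complete MV-algebras without proof.
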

\begin{proof}
(i)\ Since $\langle \text{Id}(R), \wedge,\vee,0,R\rangle$ is a distributive lattice and $\wedge = \cap, \vee=+$, we have $I\cap (J+K)= I\wedge (J\vee K)= (I\wedge J)\vee (I\wedge K)= (I\cap J)+ (I\cap K)$.\\
(ii)\ Let  $\{J_{i}\}_{i\in T}$ be a family of ideals of   $R$, since
$A(R)=\langle \text{Id}(R), \oplus,^\ast,
0,1\rangle$ is a complete MV-algebra, we have
$I+\bigcap_{i\in T}J_{i}=I\vee\bigwedge_{i\in T}J_{i} =\bigwedge_{i\in T}(I\vee
J_{i})= \bigcap_{i\in T}(I+ J_{i})$.
\end{proof}
\begin{prop}\label{AS1}
Prime ideals of GLRs are maximal.
\end{prop}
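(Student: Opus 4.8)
The plan is to reduce to the case of the zero prime ideal by passing to a quotient, and then to exploit the double-annihilator identity $I^{\ast\ast}=I$ from Proposition \ref{inv} together with condition (AN).

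First I would observe that a prime ideal $P$ is, by definition, proper, so by Theorem \ref{oper}(2) the quotient $R/P$ is again a GLR. Moreover $P$ is maximal in $R$ precisely when $R/P$ has no nontrivial two-sided ideals, i.e. when $R/P$ is a simple ring; and $P$ is a prime ideal of $R$ precisely when the zero ideal of $R/P$ is prime, i.e. when $R/P$ is a prime ring (under the correspondence $I\cdot J\subseteq P\leftrightarrow \bar I\cdot\bar J=0$ and $I\subseteq P\leftrightarrow\bar I=0$ for ideals containing $P$). Hence it suffices to prove the following: \emph{a GLR that is a prime ring is simple}. This reduction is the crucial move, since it replaces the arbitrary prime $P$ by the zero ideal, against which the primeness hypothesis becomes an equality rather than a mere containment.

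So let $\bar R$ be a GLR which is a prime ring, and let $I$ be any ideal of $\bar R$. The key observation is that $I\cdot I^\ast=0$: indeed $I^\ast=I^-=\{x:Ix=0\}$ by (AN), so every product of an element of $I$ with an element of $I^\ast$ vanishes. Since $\bar R$ is prime, $I\cdot I^\ast=0$ forces $I=0$ or $I^\ast=0$. In the second case Proposition \ref{inv} gives $I=I^{\ast\ast}=(I^\ast)^\ast=0^\ast=\bar R$, using that $0^\ast=0^-=\bar R$. Thus every ideal of $\bar R$ equals $0$ or $\bar R$, so $\bar R$ is simple, and therefore $P$ is maximal.

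The only real content, and the step I would guard most carefully, is the reduction: I must verify that $R/P$ genuinely is a GLR (so that $I^{\ast\ast}=I$ is available \emph{inside} the quotient) and that the ring-theoretic notions of prime and maximal transfer correctly through the ideal correspondence. The first is exactly Theorem \ref{oper}(2), and the second is the standard bijection between ideals of $R$ containing $P$ and ideals of $R/P$; once these are in place, the annihilator computation in the last paragraph is entirely routine.
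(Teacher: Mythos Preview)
Your proof is correct and follows essentially the same approach as the paper: pass to the quotient $R/P$, which is a prime GLR by Theorem~\ref{oper}(2), and use the double-annihilator identity of Proposition~\ref{inv} to force any nonzero ideal to equal the whole ring. The only cosmetic difference is that the paper phrases the key step as ``the annihilator of a nonzero ideal in a prime ring is zero'' applied to $J/P$, whereas you invoke primeness directly on the product $I\cdot I^\ast=0$; the content is identical.
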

\begin{proof}
Let $R$ be a GLR. If $P$ is a prime
ideal of $R$, then $R/P$ is a prime ring (see for e.g., \cite{tyl}, observation following definition 10.15). Let $J$ be a proper ideal of
$R$, $P \subseteq J$. Suppose that $J \neq P$. Since the left and right annihilators of any nonzero ideal of a prime ring are zero, then $(J/P)^{\sim}= (J/P)^{-}=0$. But, since $R/P$ is a GLR by Theorem \ref{oper}(2), we conclude that $J/P=((J/P)^{\ast})^{\ast}= 0^{\ast}=R/P$. Hence, $J = R$ and $P$ is maximal.
\end{proof}
Let $R$  be a ring and $M$ a subring (with or without a unity). For every ideal $I$ of $R$, let
  $$ I^{-M} = \{x\in M: Ix=0\}\; \;
\text{and}\; \;  I^{\sim M}  =\{x\in M : xI=0\}.$$
Observe that $I^{-M}=M\cap I^-$ and $I^{\sim M}=M\cap I^\sim$. 
\begin{prop}
Let $R$  be a GLR and $M$ an ideal of $ R $  such that  $M\cap M^- =
0$ and $M\cap M^\sim = 0$. Then $M$ is a GLR.
\end{prop}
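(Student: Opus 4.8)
The plan is to show that the two hypotheses force $M$ to be a direct summand of $R$, and then to realize $M$ as a quotient of $R$ so that Theorem~\ref{oper}(2) applies. First I would invoke the commutativity of $A(R)$ (Lemma~\ref{mv}, property (AN)) to note that $M^-=M^\sim=M^\ast$; thus the two hypotheses collapse to the single condition $M\cap M^\ast=0$. Throughout I will freely use that $M^{\ast\ast}=M$ (Proposition~\ref{inv}), that $0^\ast=R$, and that for ideals $I,J$ one has $(I+J)^\ast=I^\ast\cap J^\ast$ (the annihilator-of-a-sum identity from the general-ring annihilator properties, combined with (AN)).

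The crucial step is to prove that $M+M^\ast=R$, i.e.\ that $M$ is a complemented ideal. Using the identities just recalled, I would compute
\[
(M+M^\ast)^\ast=M^\ast\cap M^{\ast\ast}=M^\ast\cap M=0 .
\]
Applying $\ast$ once more and using Proposition~\ref{inv} together with $0^\ast=R$ gives $M+M^\ast=(M+M^\ast)^{\ast\ast}=0^\ast=R$. This is the heart of the argument. Next, since $M$ and $M^\ast$ are ideals, their products satisfy $M\cdot M^\ast,\ M^\ast\cdot M\subseteq M\cap M^\ast=0$. Combined with $M\cap M^\ast=0$ and $M+M^\ast=R$, this exhibits $R$ as the internal ring direct product $R\cong M\times M^\ast$: every $r\in R$ writes uniquely as $r=m+m'$ with $m\in M$, $m'\in M^\ast$, and the vanishing of the cross terms $mm'$, $m'm$ shows the projection $\pi\colon R\to M$ is a ring epimorphism with kernel $M^\ast$. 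Hence $M\cong R/M^\ast$.

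To conclude, note that if $M=0$ the statement is trivial, and if $M\neq 0$ then $M^\ast$ is a proper ideal of $R$: for $M^\ast=R$ would mean $MR=0$, whence for each $x\in M$, choosing a central idempotent $e$ with $ex=x$ gives $x=xe\in MR=0$, forcing $M=0$. Therefore Theorem~\ref{oper}(2) applies and $R/M^\ast$ is a GLR. Since being a GLR is preserved under ring isomorphism---an isomorphism of rings induces an isomorphism of their ideal lattices respecting $+,\cdot,{}^-,{}^\sim$, and it carries rings generated by central idempotents to rings generated by central idempotents---the isomorphism $M\cong R/M^\ast$ shows that $M$ is a GLR. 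In particular, the direct-summand structure supplies for free that $M$ is generated by central idempotents, as demanded by the definition.

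The main obstacle is the complementation step, namely recognizing that $M\cap M^\ast=0$ already forces $M+M^\ast=R$ via the double-annihilator identity; once $M$ is known to be a direct summand, everything reduces to the already-established stability of GLRs under quotients. I expect the only point requiring care in the full write-up to be the verification that the hypotheses $M\cap M^-=0$ and $M\cap M^\sim=0$ genuinely coincide here (so that both are needed only insofar as (AN) makes them equivalent), and the routine check that $M\cdot M^\ast=M^\ast\cdot M=0$, which is what makes the algebraic sum $M+M^\ast$ an honest ring direct product rather than merely a lattice join.
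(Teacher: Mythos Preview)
Your argument is correct, and it takes a genuinely different route from the paper's own proof. Both proofs begin the same way: (AN) collapses the hypotheses to $M\cap M^\ast=0$, and then one establishes the complementation $M+M^\ast=R$. (The paper deduces this from the MV-identity $M\oplus M^\ast=R$ together with $M\odot M^\ast\subseteq M\cap M^\ast=0$, while you deduce it from $(M+M^\ast)^\ast=M^\ast\cap M^{\ast\ast}=0$ and Proposition~\ref{inv}; both are fine.) From this point on the strategies diverge. The paper works \emph{intrinsically}: it uses $R=M+M^\ast$ and $M^\ast\cdot M=M\cdot M^\ast=0$ to show that every two-sided ideal of $M$ is already a two-sided ideal of $R$, explicitly manufactures a central idempotent of $M$ from one of $R$ via the decomposition $e=m+m'$, and then reads off (AN), (CO), and (LR) for $M$ from their validity in $R$ (appealing to \cite[Proposition~6.1]{BN} for (LR)). You instead argue \emph{extrinsically}: the same complementation and vanishing of cross-products give an internal ring direct product $R\cong M\times M^\ast$, hence a ring isomorphism $M\cong R/M^\ast$, and Theorem~\ref{oper}(2) finishes the job.

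Your approach is more economical: it avoids re-verifying the GLR axioms for $M$ and sidesteps the external citation for (LR). The paper's approach, on the other hand, yields as a byproduct the structural observation recorded just after the proof, namely that $\text{Id}(M)\subseteq\text{Id}(R)$ and $I\oplus_M J=I\oplus J$; your quotient description gives this too, but only after translating back through the isomorphism. One small simplification available to you: the case analysis on whether $M^\ast$ is proper can be shortened to a single line, since $M^\ast=R$ implies $M=M^{\ast\ast}=R^\ast=0$ directly from Proposition~\ref{inv}.
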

\begin{proof}
Since $R$ satisfies (AN), then $M^-=M^\sim=M^\ast$. Thus, $M\cap M^\ast =0$, which implies $M + M^\ast =M\vee M^\ast=M\oplus M^\ast=R $. It follows that any ideal of $ M $ is an ideal of  $R$, i.e., $\text{Id}(M)\subseteq \text{Id}(R)$.\\
 Let  $ x\in M$, then as $R$ is generated by central idempotents, there is $e\in R$ central idempotent such that $ex=x.$
 Since  $M + M^\ast =R$, there are $m\in M$, $m' \in M^\ast$ such that
  $m + m' =e $. Then, $me+m'e=e^2=e$, so $m+m'=me+m'e$. Thus, $m-me=m'-m'e\in M\cap M^\ast=0$ and it follows that $me=m$. A similar argument shows that $m=em$. On the other hand, from $m + m' =e $, one gets $m^2=me=m$. Thus, $m$ is idempotent in $M$, and clearly satisfies $mx=ex=x$. To see that $m$ is central in $M$, let $a\in M$. Then since $e$ is central in $R$, then $ea=ae$, that is $(m+m')a=a(m+m')$. Hence, $ma=am$ and $m$ is central in $M$. Therefore, $M$ is generated by central idempotents. \\
In addition since $\text{Id}(M)\subseteq \text{Id}(R)$, then $M$ satisfies (AN) and (CO). Finally, $M$ satisfies (LR) and the proof of this is identical to the one given in \cite[Proposition 6.1]{BN}. Hence, $M$ is a GLR as claimed.
\end{proof}
One should observe that since $\text{Id}(M)\subseteq \text{Id}(R)$, the proof of (LR) shows that in $A(M)$ , $ I \oplus_{M} J =
  I\oplus J $.
\section{Representation of GLRs}
The goal of this section is to find a representation of GLRs that would generalize that of \L ukasiewicz rings obtained in \cite[Theorem 7.7]{BN}.\par
Recall that a ring $R$ is called a special primary ring (\textit{SPIR}) if $R$ has a unique maximal ideal $M$ and every proper ideal in $R$ is a power of $M$. By chain ring, we mean a ring whose ideals are linearly ordered by inclusion.\par
We have the following class of GLRs.
\begin{prop}\label{acr}
Every unitary special primary ring is a GLR.
\end{prop}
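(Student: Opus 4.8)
The plan is to invoke the characterization of GLRs established above (a ring is a GLR if and only if it is generated by central idempotents and satisfies (AN), (CO), (LR)), and to verify its three hypotheses for a unitary special primary ring $R$ with maximal ideal $M$. That $R$ is generated by central idempotents is immediate, since the unit $1$ is a central idempotent with $1\cdot x=x$ for every $x\in R$. The backbone of everything else is a precise description of $\mathrm{Id}(R)$: since $\{0\}$ is a proper ideal it is a power of $M$, say $M^{n}=\{0\}$ with $n$ minimal. Then $M$ is nilpotent of index $n$, the powers $M^{m}$ are nonzero for $m<n$, and $\mathrm{Id}(R)$ is exactly the chain $R=M^{0}\supsetneq M\supsetneq\cdots\supsetneq M^{n-1}\supsetneq M^{n}=\{0\}$, with $M^{i}\cdot M^{j}=M^{i+j}$ throughout.

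Granting this, (CO) is immediate: writing $I=M^{i}$ and $J=M^{j}$, we have $I\cdot J=M^{i+j}=J\cdot I$. For (AN) the decisive step is the annihilator identity $(M^{i})^{-}=(M^{i})^{\sim}=M^{n-i}$. I would first check that $(M^{i})^{-}$ is a two-sided ideal (if $M^{i}x=0$ then $M^{i}(rx)=(M^{i}r)x\subseteq M^{i}x=0$ and $M^{i}(xr)=(M^{i}x)r=0$), so by the structure of $R$ it equals some $M^{k}$, and it is in fact the largest ideal $J$ with $M^{i}J=0$. Among the powers, $M^{i}\cdot M^{k}=M^{i+k}=0$ holds exactly when $k\ge n-i$ (using $M^{m}\ne 0$ for $m<n$), so the largest such power, hence the largest such ideal, is $M^{n-i}$. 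The symmetric computation gives $(M^{i})^{\sim}=M^{n-i}$, so (AN) holds with $(M^{i})^{\ast}=M^{n-i}$.

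It remains to verify (LR), namely $I+J=\big(I^{\ast}\cdot(I^{\ast}\cdot J)^{\ast}\big)^{\ast}$. With $I=M^{i}$ and $J=M^{j}$ the left-hand side is $M^{\min(i,j)}$, and the right-hand side is a matter of exponent bookkeeping under the rule $(M^{k})^{\ast}=M^{n-k}$ (reading $M^{k}=\{0\}$ and $(M^{k})^{\ast}=R$ when $k\ge n$): one gets $I^{\ast}\cdot J=M^{n-i+j}$, then $(I^{\ast}\cdot J)^{\ast}=M^{i-j}$ (interpreted as $R$ when $i\le j$), then $I^{\ast}\cdot(I^{\ast}\cdot J)^{\ast}=M^{n-\min(i,j)}$, and a final $\ast$ returns $M^{\min(i,j)}$, matching the left-hand side. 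This settles all three conditions, so $R$ is a GLR.

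I expect the annihilator identity $(M^{i})^{\ast}=M^{n-i}$ to carry the whole proof; once it is in place, (CO) and (LR) reduce to routine arithmetic in the exponent semigroup. The one point demanding care is the boundary behaviour when an intermediate exponent reaches or exceeds $n$, where a power of $M$ collapses to $\{0\}$ and its annihilator jumps to $R$; tracking this truncation correctly is exactly what makes the two regimes $i\le j$ and $i>j$ in (LR) both collapse to $M^{\min(i,j)}$.
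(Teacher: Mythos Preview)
Your proof is correct and follows essentially the same route as the paper's: both use the characterization of GLRs via (AN), (CO), (LR), identify the ideal lattice as the chain of powers $R=M^{0}\supsetneq\cdots\supsetneq M^{n}=0$, establish the key annihilator formula $(M^{k})^{\ast}=M^{n-k}$ by using that the annihilator must itself be a power of $M$, and then reduce (CO) and (LR) to exponent arithmetic. Your treatment of (LR) via unified bookkeeping with a truncation convention is a slight stylistic streamlining of the paper's explicit case split $i\le j$ versus $j<i$, but the underlying argument is identical.
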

\begin{proof}
$R$ has a unique maximal ideal $M$, which is nilpotent. Therefore there exists a natural number $n$ such that $M^{n}=0$ (we assume that $n$ is the smallest integer for which $M^n=0$). It follows that for all $1\leq i<j\leq n$, $M^i\ne M^j$. \\
We prove that for every $1\leq k< n$, $M^{k\sim}=M^{n-k}$. Note that $M^{n-k}\cdot M^k=M^n=0$, so $M^{n-k}\subseteq M^{k\sim}$. But since every ideal of $R$ is a power of $M$, then $M^{k\sim}=M^i$ where $i\leq n-k$. In addition $0=M^{k\sim}\cdot M^k=M^i\cdot M^k=M^{i+k}$, thus by the minimality of $n$, we may conclude that $n\leq i+k$. It follows that $n-k\leq i$, and $i=n-k$. Hence, $M^{k\sim}=M^{n-k}$ as needed. A similar argument shows that $M^{k-}=M^{n-k}$. In particular, $I^\sim=I^-$ for all ideals of $R$, which is (AN). In addition, it also follows that $R$ satisfies (CO).\\
 To prove (LR), let $I=M^i$, $J=M^j$ with $1\leq i, j\leq n$. We consider cases.\\
Case 1: Suppose $i\leq j$. If $j=n$, then $J=0$ and the identity holds trivially. So, assume that $j<n$, then $I^\sim=M^{n-i}$ and $J^\sim=M^{n-j}$. Note that $I+J=I$, and since $n-i+j\geq n$, then $I^\ast \cdot J=0$. It follows easily that $I+J={({({I}^\ast \cdot J)}^\ast \cdot {I}^\ast)}^\ast$.\\
Case 2: Suppose that $j<i$, then $I+J=J$. If $i=n$, then $I=0$ and the identities are obvious. Assume that $i<n$, then $I^\ast=M^{n-i}$ and $I^\ast\cdot J=M^{n-i+j}$ (note that $n-i+j<n$). Hence, $(I^\ast\cdot J)^\ast=M^{i-j}$,  $((I^\ast\cdot J)^\ast)\cdot I^\ast=M^{n-j}$, and $(((I^\ast\cdot J)^\ast)\cdot I^\ast)^\ast=M^j=J=I+J$.\\
This completes the proofs of (AN), (CO) and (LR). Thus, $R$ is a GLR as claimed.
\end{proof}
\begin{prop} \label{artch}
Every unitary left Artinian ring whose left ideals are linearly ordered by inclusion is a GLR.
\end{prop}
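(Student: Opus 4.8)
The plan is to show that any ring $R$ satisfying the hypotheses is in fact a unitary special primary ring, after which the conclusion follows at once from Proposition \ref{acr}. The first step is to observe that $R$ is local. Since the left ideals of $R$ are linearly ordered by inclusion, any two maximal left ideals are comparable and hence equal; as $R$ is unitary, a maximal left ideal exists by Zorn's lemma, so $R$ has a unique maximal left ideal, namely its Jacobson radical $J$. In a local ring this $J$ is also the unique maximal two-sided ideal, which I will call $M$, so that $M=J$.

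Next I would use that $R$ is left Artinian and local to conclude that $J$ is nilpotent; let $n$ be least with $J^n=0$. Being left Artinian and unitary, $R$ is left Noetherian (Hopkins--Levitzki), so each power $J^k$ is a finitely generated left ideal. Nakayama's lemma then forces the descending chain
$$R \supsetneq J \supsetneq J^2 \supsetneq \cdots \supsetneq J^{\,n}=0$$
to be strictly decreasing: if $J^k=J^{k+1}=J\cdot J^k$ with $k<n$, then $J^k\supseteq J^{\,n-1}\neq 0$ contradicts $J^k=0$, so no collapse occurs.

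The crux is to rule out any left ideal lying strictly between consecutive powers of $J$. For each $0\le k<n$, the quotient $J^k/J^{k+1}$ is a left module over the division ring $R/J$, that is, an $R/J$-vector space, and its subspaces correspond exactly to the left ideals of $R$ between $J^{k+1}$ and $J^k$. By hypothesis these are linearly ordered, but the subspaces of a vector space are linearly ordered only when the dimension is at most $1$; since $J^k\neq J^{k+1}$, this dimension is exactly $1$. Hence there are no left ideals strictly between $J^{k+1}$ and $J^k$, and therefore every left ideal of $R$ --- \emph{a fortiori} every two-sided ideal --- is a power of $J=M$. Thus $R$ has a unique maximal ideal $M$ whose powers exhaust the proper ideals, i.e. $R$ is a unitary special primary ring, and Proposition \ref{acr} yields that $R$ is a GLR. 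I expect the dimension step to be the main obstacle: converting the linear-ordering hypothesis into the bound $\dim_{R/J}\!\big(J^k/J^{k+1}\big)\le 1$ and thereby pinning every left ideal to a power of $J$ is the heart of the matter, while locality, nilpotence of $J$, and the Nakayama argument are standard facts about left Artinian local rings.
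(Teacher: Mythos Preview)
Your argument is correct and follows essentially the same route as the paper: reduce to Proposition \ref{acr} by showing that $R$ is a unitary special primary ring, via the observation that each layer $J^{k}/J^{k+1}$ is a simple $R/J$-module so that no left ideal can sit strictly between consecutive powers of $J$. The only cosmetic differences are that you phrase the layer argument in terms of vector-space dimension over the division ring $R/J$ (the paper says ``semisimple chain module, hence simple''), and you invoke Nakayama for the strict descent of the chain $J\supsetneq J^{2}\supsetneq\cdots$, whereas the minimality of $n$ already forces this directly.
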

\begin{proof}
It is known that if $R$ is left Artinian, then $J(R)=B(R)$, where $B(R)$ is the Brown-McCoy radical of $R$ (the intersection of maximal two-sided ideals of $R$). Since the left ideals of $R$ are linearly ordered, then $R$ has a unique maximal (left) ideal $M$. Hence $J(R)=M$, which is a nilpotent two-sided ideal of $R$ (see for e.g., \cite[Theorem 4.12]{tyl}), then there exists a natural number $n$ such that $M^{n}=0$ (we assume that $n$ is the smallest integer for which $M^n=0$). It follows that for all $1\leq i<j\leq n$, $M^i\ne M^j$. We shall prove that every ideal of $R$ is a power of $M$. Let $I$ be a left ideal of $R$, and let $k$ be the smallest integer such that $M^k\subseteq I$. Then $M^{k-1}\nsubseteq I$ and since the left ideals of $R$ are linearly ordered, we conclude that $M^k\subseteq I\subsetneq M^{k-1}$. Now we consider, the ring $R/M$ which is semisimple (in fact simple), and since $M^{k-1}/M^k$ is an $R/M$-module, then $M^{k-1}/M^k$ is semisimple. Thus $M^{k-1}/M^k$ is a direct sum of simple $R/M$-submodules. But, $M^{k-1}/M^k$ is also a chain module (its submodules are linearly ordered). Therefore, $M^{k-1}/M^k$ is a simple $R/M$-module. Since $I/M^k$ is a proper $R/M$-submodule of $M^{k-1}/M^k$, it follows that $I=M^k$. Therefore, every left ideal of $R$ is a power of $M$. In particular, every ideal of $R$ is a power of $M$ and $R$ is a special primary ring. The conclusion now follows from Proposition \ref{acr}.
\end{proof}
One should observed that unitary special primary rings are quasi-Frobenius rings (QF-rings). \par
We have the following nice examples of unitary special primary rings.	
\begin{ex}
Let $F$ be a field and let $T=F[x]/\langle x^n\rangle$, where $n\geq 1$ is a fixed integer. Then $T$ is a \L ukasiewiecz ring (see \cite[Ex. 3.5]{BN}). In fact $T$ is a special primary ring, and using the properties stated in Example \ref{mainex}, one can easily verify that $R=M_m(T)$ is a unitary special primary ring for all integer $m\geq 1$.
\end{ex}
\begin{ex}
Let $D$ be a Noetherian Dubrovin valuation ring, then the left ideals of $D$ are known to be linearly ordered by inclusion (see for e.g., \cite{Wa}). Let $M$ be the unique maximal (left) ideal of $D$ and $R=D/M^n$, where $n\geq 1$ is an integer. We claim that $R$ is a special primary ring. That the left ideals of $R$ are linearly ordered by inclusion follows from the fact that the same is true for $D$. In addition, $J(R)=M/M^n$, and from this it follows that $J(R)^n=0$ and $J(R)$ is nilpotent. Finally $R/J(R)\cong D/M$, which is semisimple (in fact simple). Hence, by Hopkins-Levitzki Theorem, $R$ is Artinian and it follows from Proposition \ref{artch} that $R$ is a GLR.
\end{ex}
\begin{prop}\label{zorn}
Every proper ideal of a GLR is contained in a maximal ideal.
\end{prop}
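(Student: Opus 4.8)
The plan is to prove the statement by Zorn's lemma, but the essential difficulty is that a GLR need not be unitary (for instance an infinite direct sum of fields is a GLR by Theorem~\ref{oper}(3), yet has no identity). Consequently one cannot apply Zorn's lemma directly to the poset of proper ideals containing a given proper ideal: the union of a chain of proper ideals may well be all of $R$, as happens for the increasing chain $\bigoplus_{n\le k} F$ inside $\bigoplus_{n} F$. To circumvent this, I would exploit the fact that $R$ is generated by central idempotents in order to reduce the problem to a unitary corner ring, where Zorn's lemma does apply.

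So let $I$ be a proper ideal of $R$. First I would produce a central idempotent $e\notin I$: if every central idempotent of $R$ lay in $I$, then since each $x\in R$ satisfies $x=ex$ for some central idempotent $e$, we would get $x\in I$ for all $x$, contradicting $I\ne R$. Fixing such an $e$, I would record the ring decomposition $R=eR\oplus N$, where $N:=\{x\in R: ex=0\}$; both summands are two-sided ideals because $e$ is central, and $eR$ is a unitary ring with identity $e$. The verifications here (that $N$ is a two-sided ideal, that the sum is direct, and that $eR$ is unitary with unit $e$) are straightforward consequences of $e$ being a central idempotent.

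Next, I would observe that $I\cap eR$ is a proper ideal of the unitary ring $eR$ (proper since $e\notin I$ forces $e\notin I\cap eR$, while $e$ is the identity of $eR$). By the classical Zorn argument applied to the unitary ring $eR$, there is a maximal two-sided ideal $M_0$ of $eR$ with $I\cap eR\subseteq M_0$. I would then check that $M_0$ is in fact a two-sided ideal of $R$ (again using centrality of $e$), so that $M:=M_0\oplus N$ is a two-sided ideal of $R$. It is proper because its $eR$-component $M_0$ omits $e$, and it is maximal because $R/M\cong eR/M_0$ is simple. Finally $I\subseteq M$: writing $x\in I$ as $x=ex+(x-ex)$ gives $ex\in I\cap eR\subseteq M_0$ and $x-ex\in N$, hence $x\in M_0+N=M$.

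The step I expect to require the most care is the very first one, namely recognizing that properness of unions of chains fails in the non-unitary setting and that the remedy is to pass to the unitary summand $eR$; once the decomposition $R=eR\oplus N$ is in hand, the remaining arguments are elementary. Alternatively, one could package the same idea as the assertion that every nonzero GLR has a maximal ideal (via the corner ring $eR$) and then apply it to the GLR $R/I$, invoking Theorem~\ref{oper}(2) and pulling the resulting maximal ideal back to $R$.
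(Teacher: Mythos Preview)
Your argument is correct, but it follows a different route from the paper's. Both proofs begin identically by choosing a central idempotent $e\notin I$ (using that $R$ is generated by central idempotents). From there the paper applies Zorn's lemma to the family of ideals containing $I$ and avoiding the multiplicative set $\{e^n:n\ge 1\}$ (which is just $\{e\}$, since $e$ is idempotent), obtains an ideal $P$ maximal with respect to this property, shows $P$ is prime in the usual way, and then invokes Proposition~\ref{AS1} (prime ideals of a GLR are maximal) to conclude. You instead use the Peirce decomposition $R=eR\oplus N$ to pass to the unitary corner ring $eR$, apply Krull's theorem there to get a maximal ideal $M_0\supseteq I\cap eR$, and set $M=M_0\oplus N$. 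Your approach is more self-contained and in fact more general: it uses only that $R$ is generated by central idempotents, never the GLR axioms (AN), (CO), (LR), nor the MV-algebra structure of $A(R)$. The paper's approach, by contrast, leans on the previously established structural fact that primes are maximal in a GLR, which in turn depends on $I^{\ast\ast}=I$; this ties the result into the surrounding theory but makes the argument less portable.
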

\begin{proof}
Let $I$ be a proper ideal of a generalized \L ukasiewicz ring $R$, then there exists $x\in R$, with $x\notin I$. Since $R$ is generated by central idempotents, let $r\in R$ be a central idempotent element such that $xr=x$. Then for every integer $n\geq 1$, $xr^n=x$. Thus $r^n\notin I$ for all integers $n\geq 1$, that is $I\cap \{r^n:n\geq 1\}=\emptyset$. The family $\mathcal{I}$ of ideals of $R$ containing $I$ and disjoint with $\{r^n:n\geq 1\}$ is an inductive family under inclusion. By Zorn's lemma, this family has a maximal element $P$. We claim that $P$ is a prime ideal of $R$. Clearly $P$ is proper since $r\notin P$. In addition suppose that $RaR\cdot RbR\subseteq P$ and $a\notin P$, $b\notin P$. Then, by the maximality of $P$, $P+RaR$ and $P+RbR$ are not members of $\mathcal{I}$. But, since both ideals contain $I$, then there exist $n, m\geq 1$ such that $r^n\in P+RaR$  and $r^m\in P+RbR$. Thus, $r^{m+n}\in (P+RaR)\cdot (P+RbR)=P+RaR\cdot RbR\subseteq P$. Hence, $r^{m+n}\in P$, which is a contradiction since $P\in \mathcal{I}$. Therefore, $P$ is a prime ideal of $R$ that contains $I$ and by Proposition \ref{AS1}, $P$ is a maximal ideal of $R$.
\end{proof}
\begin{cor}
For every generalized \L ukasiewicz ring $R$, the pseudo MV-algebra $A(R)$ is complete and atomic.
\end{cor}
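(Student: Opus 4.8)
Since the underlying lattice $\langle \text{Id}(R),\vee,\wedge,0,1\rangle$ was already noted to be complete (suprema are sums of ideals, infima are intersections), the only thing left to establish is that $A(R)$ is atomic, i.e.\ that every nonzero ideal of $R$ contains an atom (a minimal nonzero ideal). The plan is to exploit the involution $\ast$ as a dual automorphism of the ideal lattice. By the annihilator properties together with (AN), $I\subseteq J$ forces $J^\ast\subseteq I^\ast$, and by Proposition \ref{inv} one has $I^{\ast\ast}=I$; hence $I\mapsto I^\ast$ is an order-reversing involution, in particular an order-anti-isomorphism of $\langle \text{Id}(R),\subseteq\rangle$ onto itself. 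Such a map interchanges atoms and coatoms, and the coatoms of $\text{Id}(R)$ are precisely the maximal ideals of $R$.

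The argument then runs by dualizing the existence of maximal ideals. First I would record the boundary values $\{0\}^\ast=R$ and $R^\ast=\{0\}$ (the latter is part (v) of the annihilator proposition), so that $I\neq\{0\}$ is equivalent to $I^\ast\neq R$, i.e.\ to $I^\ast$ being proper. Now let $I$ be any nonzero ideal. Then $I^\ast$ is proper, so by Proposition \ref{zorn} there is a maximal ideal $M$ with $I^\ast\subseteq M$. Applying $\ast$ and using Proposition \ref{inv} gives $M^\ast\subseteq I^{\ast\ast}=I$. It remains to check that $M^\ast$ is an atom: if $\{0\}\subsetneq B\subseteq M^\ast$, then applying $\ast$ yields $M=M^{\ast\ast}\subseteq B^\ast\subsetneq R$ (the right-hand strictness because $B\neq\{0\}$ gives $B^\ast\neq R$), and maximality of $M$ forces $B^\ast=M$, whence $B=M^\ast$. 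Thus $M^\ast$ is a nonzero ideal with no ideal strictly between it and $\{0\}$, i.e.\ an atom contained in $I$, which proves atomicity.

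The real content is entirely carried by Proposition \ref{zorn} (which in turn rests on Zorn's lemma and the equality of prime and maximal ideals, Proposition \ref{AS1}); once ``enough coatoms'' is available, the passage to atoms is purely formal via the involution. Consequently the only point demanding care---and the one I would state explicitly---is the verification that $\ast$ genuinely carries coatoms to atoms, which amounts to combining order-reversal, injectivity of $\ast$, and $I^{\ast\ast}=I$ exactly as above; beyond this there is no further obstacle.
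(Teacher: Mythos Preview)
Your proof is correct and follows essentially the same route as the paper: completeness is immediate from the ideal lattice, and for atomicity one takes a nonzero $I$, passes to the proper ideal $I^\ast$, invokes Proposition~\ref{zorn} to find a maximal $M\supseteq I^\ast$, and then $M^\ast\subseteq I$ is the desired atom. The only difference is that you spell out in detail why $M^\ast$ is an atom via the order-reversing involution, whereas the paper simply asserts this as clear; one tiny point you leave implicit is that $M^\ast\neq 0$, which follows since $M\neq R$ and $\ast$ is injective.
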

\begin{proof}
First, we recall that the ideals of any ring form a complete lattice. Suppose that $R$ is a GLR, and let $0\ne I\in A(R)$, then $I^\ast\ne R$ and by Proposition \ref{zorn}, there exists a maximal ideal $M$ of $R$ such that $I^\ast\subseteq M$. Thus, $M^\ast \subseteq I$. Clearly, $M^\ast$ is an atom in $A(R)$. 
\end{proof}
On should observe that if $R$ is a GLR, then $M\mapsto M^\ast$ is a one-to-one correspondence between the maximal ideals of $R$ and the atom of $A(R)$.
\begin{prop}\label{finite}
For every unitary generalized \L ukasiewicz ring $R$, the MV-algebra $A(R)$ is finite.
\end{prop}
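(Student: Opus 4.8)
The plan is to lean on the three structural facts already in hand for $A(R)$ and to use unitarity through a single compactness observation. By Lemma \ref{mv} together with the completeness of $\mathrm{Id}(R)$, the MV-algebra $A(R)$ is complete; by the preceding corollary it is atomic, its atoms being exactly the ideals $M^\ast$ for $M$ a maximal ideal, and $M\mapsto M^\ast$ is a bijection from the maximal ideals onto the atoms. The only feature of unitarity I will use is that the top element $R$ is \emph{compact} in the lattice $\mathrm{Id}(R)$: if $R=\sum_{\lambda} I_\lambda$, then $1$ lies in a finite subsum, so $R$ already equals that finite subsum. (This is precisely what fails for $\prod_{n} F$ in the earlier remark, and what forces the distinction from the non-unitary case.)

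The first step is to show that $R$ has only finitely many maximal ideals. Let $J=\bigcap\{M:M\text{ maximal}\}$; since $A(R)$ is a complete MV-algebra, De Morgan's law gives $J^\ast=\bigvee_{M} M^\ast$. Passing to $R/J$, which is again a unitary GLR by Theorem \ref{oper}(2), the radical collapses to $0$, so $\bigvee_{\bar M}\bar M^\ast=R/J$. Compactness of the top then yields $R/J=\bar M_1^\ast+\cdots+\bar M_n^\ast$ for finitely many maximal ideals, and since the $\bar M_i^\ast$ are atoms and the lattice is distributive, any further atom $\bar M^\ast\le\bigvee_i \bar M_i^\ast$ must coincide with one of the $\bar M_i^\ast$. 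As the maximal ideals of $R$ correspond bijectively to those of $R/J$, there are only finitely many maximal ideals $M_1,\dots,M_n$.

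The second step is to upgrade ``finitely many atoms'' to genuine finiteness, and this is the crux. The idea is to show that each maximal ideal $M_i$ is nilpotent: the ideals $M_i^\ast\subseteq (M_i^2)^\ast\subseteq\cdots$ form an increasing chain, and by De Morgan $\bigvee_k (M_i^k)^\ast=\bigl(\bigcap_k M_i^k\bigr)^\ast$, so once one knows $\bigcap_k M_i^k=0$ the supremum equals $R$ and compactness forces $(M_i^{\,m})^\ast=R$, i.e.\ $M_i^{\,m}=0$, for some $m$. With each $M_i$ nilpotent and the $M_i$ pairwise comaximal, $J=\bigcap_i M_i=\prod_i M_i$ is nilpotent, $R$ is semiperfect and decomposes as a finite product of local GLRs, each of which is a unitary special primary ring by the arguments of Propositions \ref{acr}--\ref{artch}, whose MV-algebra is a finite \L ukasiewicz chain; thus $A(R)$ is a finite product of finite chains and so is finite.

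The main obstacle is precisely the nilpotency claim $\bigcap_k M_i^{\,k}=0$, i.e.\ ruling out an infinite strictly ascending chain $M_i^\ast\subsetneq(M_i^2)^\ast\subsetneq\cdots$ below the compact top. The leverage must come from the fact that $A(R)$ is Archimedean (the property already invoked in the proof of Lemma \ref{mv} via \cite[Proposition 6.4.14]{DP}), which prevents such an infinite chain of proper ``truncations'' of $R$. I would isolate this as a self-contained lemma, stated purely MV-algebraically: \emph{a complete, atomic, Archimedean MV-algebra whose greatest element is compact is finite}, and prove the proposition as an immediate application.
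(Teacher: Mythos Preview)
Your first step is correct and takes a genuinely different route from the paper's. The paper obtains ``finitely many maximal ideals'' by invoking the Hajarnavis--Norton theorem on dual rings \cite[Theorem~3.4]{HN}: since $R$ is a unitary dual ring, $R/J(R)$ is semisimple Artinian, so Wedderburn--Artin gives finitely many maximal ideals. Your compactness argument in $R/J$ (using that $1$ lies in a finite subsum of $\sum_{\bar M}\bar M^\ast$) is more elementary and entirely self-contained within the paper; it reaches the same conclusion without the external input from \cite{HN}.

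Your second step, however, is both unnecessary and contains an error. It is unnecessary because once $A(R)$ is complete, atomic, and has only finitely many atoms, \cite[Corollary~6.8.3]{C2} already yields $A(R)\cong\prod_{i=1}^n \L_{n_i}$ as a \emph{finite} product of \emph{finite} chains, hence $A(R)$ is finite; this is precisely how the paper concludes, and no nilpotency, semiperfect decomposition, or Archimedean argument is needed. The error is the claim that each maximal ideal $M_i$ is nilpotent, equivalently $\bigcap_k M_i^{\,k}=0$: in $R=\mathbb{F}_2\times\mathbb{F}_2$ the maximal ideal $M_1=0\times\mathbb{F}_2$ is idempotent, so $\bigcap_k M_1^{\,k}=M_1\ne 0$ and $(M_1^k)^\ast=M_1^\ast$ for all $k$, never reaching $R$. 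What \emph{is} nilpotent is $J=\bigcap_i M_i$, but establishing that directly (without already using the product decomposition) is no simpler than the step you are trying to supply. Your closing MV-lemma is true, but its shortest proof is again via \cite[Corollary~6.8.3]{C2}, and the Archimedean hypothesis is redundant there.
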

\begin{proof}
Let $R$ be a unitary GLR. Then by Proposition \ref{inv}, $R$ is a dual ring and by \cite[Theorem 3.4]{HN}, $R/J$ (where $J$ is the Jacobson radical of $R$) is a semi-simple Artinian ring. By the classical Wedderburn-Artin Theorem, $R/J\cong M_{n_1}(D_1)\times\cdots \times M_{n_r}(D_r)$, for some integers $n_1,\cdots, n_r\geq 1$ and some division rings $D_1, \cdots, D_r$. It follows that $R/J$ has finitely many maximal ideals. But $R/J$ and $R$ have the same number of maximal ideals. Therefore, $R$ has finitely many maximal ideals. Hence, by the observation above, the complete and atomic MV-algebra $A(R)$ has finitely many atoms. It follows from \cite[Corollary 6.8.3]{C2} that $A(R)$ is a finite product of finite MV-chains. Thus, $A(R)$ is finite.
\end{proof}
\begin{prop}\label{unitary}
Let $R$ be a GLR such that $A(R)$ is a finite MV-chain, then $R$ is unitary.
\end{prop}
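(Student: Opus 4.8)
The plan is to exhibit an explicit two-sided identity for $R$ in the form of a single central idempotent, using the hypothesis to force the principal ideals generated by central idempotents into a finite chain.

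First I would invoke that, being a GLR, $R$ is generated by central idempotents: for each $x\in R$ there is a central idempotent $e_x\in R$ with $e_x x=x$. For any central idempotent $e$ one has $eR=Re$, and this common set is a two-sided ideal of $R$ (centrality of $e$ is precisely what lets $eR$ absorb multiplication on both sides, since $r(es)=e(rs)$ and $(es)r=e(sr)$). Moreover $x=e_x x\in e_xR$, so every element $x$ lies in its associated ideal $e_xR$.

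Next I would bring in the hypothesis that $A(R)$ is a finite MV-chain. Since $A(R)=\mathrm{Id}(R)$, this says the ideals of $R$ form a finite chain under inclusion. Consequently the family $\{e_xR:x\in R\}$, being a set of ideals, is a finite and totally ordered collection, hence has a greatest element, say $eR$ for a central idempotent $e$. Because $eR$ contains every $e_xR$ and each $x$ lies in $e_xR$, I obtain $x\in eR$ for all $x\in R$, that is $R=eR$.

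Finally I would verify that this $e$ is an identity: writing an arbitrary $x\in R=eR$ as $x=er$ gives $ex=e^2r=er=x$, so $e$ is a left identity, and centrality upgrades this to $xe=ex=x$, a genuine two-sided identity; hence $R$ is unitary. The one delicate point, and where the chain hypothesis really earns its keep, is the passage from the \emph{local} identities $e_x$ to a single \emph{global} one: linearity of the chain guarantees an actual maximum among the ideals $e_xR$, so no assembling of idempotents is needed. (For a general GLR with $A(R)$ merely finite one would instead take the Boolean join of finitely many central idempotents whose ideals sum to $R$; the chain case avoids even this.)
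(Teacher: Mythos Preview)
Your proof is correct but takes a different route from the paper's. The paper picks \emph{any} nonzero central idempotent $e$ (which exists once $R$ is nontrivial), observes that the ideal $eR$ satisfies $(eR)\cdot(eR)=eR$ and is therefore a nonzero idempotent element of the MV-chain $A(R)$, and then invokes the fact that the only nonzero idempotent in a finite MV-chain is the top element, forcing $eR=R$ immediately. You instead use only the underlying order: the set $\{e_xR:x\in R\}$ sits inside the finite chain $\mathrm{Id}(R)$, hence has a maximum $eR$, which must then contain every $x$. Your argument is slightly more elementary in that it avoids the MV-theoretic fact about Boolean elements of $\L_n$ and would go through verbatim for any ring generated by central idempotents whose two-sided ideals happen to form a finite chain; the paper's argument is a touch shorter and exploits the multiplicative (not just order) structure of $A(R)$, needing only a single central idempotent rather than the whole family.
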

\begin{proof}
We may assume that $R$ is non-trivial since the result is clear if $R$ is the trivial ring. Since $R$ is generated by central idempotents, then $R$ contains a non-zero central idempotent element $e$. Consider $eR$, which is a non-zero idempotent ideal of $R$. That is, $eR$ is a non-zero idempotent element in the MV-chain $A(R)$. But, the only non-zero idempotent ($x\odot x=x$) in any finite MV-chain is $x=1$. Thus, $eR=R$ and $e$ becomes the unit of $R$.
\end{proof}
We shall now set up the ground to prove the representation theorem for GLRs. Recall that for every generalized \L ukasiewicz ring $R$, $A(R)$ is complete and atomic. Therefore, by \cite[Corollary 6.8.3]{C2}, there exists a set $X$ and integers $n_x\geq 1$ such that $A(R)\cong \prod_{x\in X}\L _{n_x}$, where $\L_n$ denotes the \L ukasiewicz chain with $n$ elements.. Let $\varphi:A(R)\to A:=\prod_{x\in X}\L _{n_x}$ be an isomorphism. For each $x\in X$, let $a_x\in A$ defined by $a_x(x)=1$ and $a_x(y)=0$ for all $y\ne x$. Let $R_x=\varphi^{-1}(a_x)$, then since $\varphi$ is a homomorphism, $\varphi(R^\ast_x)=a^\ast_x$. Note that in $A$, $a_x\vee a^\ast_x=1$, $a_x\wedge a^\ast_x=0$, $a_x^2=a_x$ for all $x\in X$ and $a_x\wedge a_y=0$ for $x\ne y$. Thus, the following facts easily follow from the fact that $\varphi$ is an isomorphism.
\begin{itemize}
\item[Fact 1]:  $R_x+R^\ast_x=R$ for all $x\in X$;
\item[Fact 2]: $R_x\cap R^\ast_x=0$ for all $x\in X$;
\item[Fact 3]:  $R_x\cap R_y=0$ for all $x\ne y$;
\item[Fact 4]: $R_x$ is idempotent, i.e., $R_x^2=R_x$.
\end{itemize}
We will also need the following lemma, that provides the main building blocks of GLRs.
\begin{lem}\label{acr2}
For every $x\in X$, 
\begin{itemize}
\item[1.] The MV-algebras $A(R/R^\ast_x)$ and $\L_{n_x}$ are isomorphic. 
\item[2.] $R/R^\ast_x$ is a unitary special primary ring.
\item[3.] $R_x\cong R/R^\ast_x$ as rings. In particular, $R_x$ is a unitary special primary ring.
\end{itemize}
\end{lem}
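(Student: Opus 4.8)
The plan is to prove the three parts in order, bootstrapping everything from the isomorphism $\varphi\colon A(R)\to A=\prod_{y\in X}\L_{n_y}$ and from the four Facts recorded above about $R_x$ and $R_x^\ast$. The key realization is that part 1 can be reduced to a cardinality count, after which parts 2 and 3 are essentially formal.

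For part 1, I would first recall that the ideals of $R/R_x^\ast$ are exactly the $J/R_x^\ast$ with $R_x^\ast\subseteq J$, so that $J\mapsto J/R_x^\ast$ is an order isomorphism of the interval $[R_x^\ast,R]$ of $A(R)$ onto $A(R/R_x^\ast)$. Transporting this interval through $\varphi$ identifies it order-theoretically with $[a_x^\ast,1]$ inside $\prod_{y}\L_{n_y}$, using $\varphi(R_x^\ast)=a_x^\ast$. Since $a_x^\ast$ has value $0$ in coordinate $x$ and value $1$ in every other coordinate, $[a_x^\ast,1]$ consists precisely of the tuples that are $1$ off coordinate $x$; hence it is a chain with exactly $n_x$ elements. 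As $R/R_x^\ast$ is a GLR by Theorem~\ref{oper}(2) (note $R_x^\ast\ne R$, because $R_x=\varphi^{-1}(a_x)\ne 0$ forces $R_x^\ast\ne R^{\ast\ast}=\{0\}^\ast$ to be proper), its algebra $A(R/R_x^\ast)$ is a genuine MV-algebra; being an MV-chain with $n_x$ elements it must be isomorphic to $\L_{n_x}$. This route sidesteps matching the MV-operations through the quotient correspondence directly, relying instead only on the fact that a finite MV-chain is determined by its cardinality.

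For part 2, I would apply Proposition~\ref{unitary} to the GLR $R/R_x^\ast$, whose algebra is the finite MV-chain $\L_{n_x}$, to conclude that $R/R_x^\ast$ is unitary. Fixing an MV-isomorphism $A(R/R_x^\ast)\cong\L_{n_x}$, let $M$ be the unique maximal ideal of $R/R_x^\ast$, corresponding to the coatom of $\L_{n_x}$. Because the operation $\odot$ of $A(R/R_x^\ast)$ is literally ideal multiplication, and in $\L_{n_x}$ every element is a power of the coatom under $\odot$, each proper ideal of $R/R_x^\ast$ is a ring-theoretic power of $M$. Hence $R/R_x^\ast$ is a unitary special primary ring.

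Finally, for part 3 I would exhibit $R$ as an internal direct product $R\cong R_x\times R_x^\ast$. The three required conditions hold: $R_x+R_x^\ast=R$ and $R_x\cap R_x^\ast=0$ are Facts 1 and 2, while $R_x\cdot R_x^\ast=R_x^\ast\cdot R_x=0$ follows from (AN), since $R_x^\ast=R_x^-=R_x^\sim$ is the two-sided annihilator of $R_x$. Then $a+b\mapsto(a,b)$ is a ring isomorphism (all cross terms vanish), so $R/R_x^\ast\cong R_x$, and part 2 yields that $R_x$ is a unitary special primary ring. I expect the only delicate point to be part 1: one must be certain that $\varphi$ composed with the quotient correspondence lands in a structure to which the finite MV-chain classification genuinely applies, and it is precisely the fact that $R/R_x^\ast$ is itself a GLR that makes $A(R/R_x^\ast)$ an honest MV-algebra and legitimizes the identification with $\L_{n_x}$.
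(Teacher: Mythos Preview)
Your proposal is correct. Parts 2 and 3 match the paper's argument almost verbatim: the paper also invokes Proposition~\ref{unitary} and the $\odot$-power structure of $\L_{n_x}$ for part 2, and for part 3 it writes down the very map $t\mapsto t+R_x^\ast$ implicit in your internal direct product decomposition, checking injectivity and surjectivity from Facts 1 and 2.

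The only genuine difference is in part 1. The paper constructs an explicit MV-isomorphism $(a_x]\to A(R/R_x^\ast)$ via $a\mapsto (\varphi^{-1}(a)+R_x^\ast)/R_x^\ast$, appealing to the standard fact that $(a_x]\cong A/(a_x^\ast]\cong\L_{n_x}$ in a product of chains. You instead pass through the order-interval $[a_x^\ast,1]\cong[R_x^\ast,R]\cong A(R/R_x^\ast)$ (as lattices only), observe this is an $n_x$-element chain, and then invoke the classification of finite MV-chains to conclude. Your route is a bit shorter and avoids checking that the correspondence respects the MV-operations, but it imports an external fact (finite MV-chains are determined by their cardinality) that the paper's explicit isomorphism does not need. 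Either approach is perfectly sound.
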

\begin{proof}
1. Note that by  \cite[Proposition 6.4.3(i)]{C2}, $A/(a^\ast_x]\cong (a_x]\cong \L_{n_x}$. So, it is enough to prove that $A(R/R^\ast_x)\cong (a_x]$. But, one can verify that $a\mapsto (\varphi^{-1}(a)+R^\ast_x)/R^\ast_x$ is an isomorphism from $(a_x]\to A(R/R^\ast_x)$. \\
2. First, $R/R^\ast$ is a GLR by Proposition \ref{oper}. Since $A(R/R^\ast_x)\cong \L_{n_x}$, then $R/R^\ast_x$ is unitary by Proposition \ref{unitary} and has a unique maximal two-sided ideal. More precisely, if $\psi: A(R/R^\ast_x)\to \L_{n_x}$ is an isomorphism, then $M_x:=\psi^{-1}\left((n_x-2)/(n_x-1)\right)$ is the unique maximal two-sided ideal of $R/R^\ast_x$. Now, let $I$ be an ideal of $R/R^\ast_x$, then $\psi(I)=k_x/(n_x-1)$ or some $0\leq k_x\leq n_x-1$. But, since $k_x/(n_x-1)=\left((n_x-2)/(n_x-1)\right)^{n_x-k_x-1}$, then $I=M_x^{n_x-k_x-1}$. Thus, $R/R^\ast_x$ is a special primary ring as claimed. \\
3. Consider $\psi: R_x\to R/R^\ast_x$, the composition of the inclusion $R_x\hookrightarrow  R$ followed by the natural projection $R\to R/R_x$. That is, $\psi(t)=t+R^\ast_x$ for all $t\in R_x$. Then $\psi$ is clearly a ring homomorphism and since $R_x\cap R^\ast_x=0$, then $\psi$ is injective. In addition $\psi$ is onto because $R_x+R^\ast_x=R$. Thus, $\psi$ is an isomorphism as needed.
\end{proof}
\begin{thm}\label{main}
A ring $R$ is a GLR if and only if $R$ is isomorphic to a direct sum of unitary special primary rings.
\end{thm}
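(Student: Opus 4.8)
The plan is to prove the two implications separately; the reverse implication is immediate from results already in hand, while the forward implication amounts to assembling the machinery of the previous lemmas into a single direct-sum decomposition.

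For the reverse direction, suppose $R$ is isomorphic to a direct sum $\bigoplus_{i\in I}R_i$ of unitary special primary rings. By Proposition \ref{acr} each $R_i$ is a GLR, and by Theorem \ref{oper}(3) an arbitrary direct sum of GLRs is again a GLR; since the property of being a GLR depends only on the ideal lattice together with its operations $+,\cdot,{}^-,{}^\sim$, it is invariant under ring isomorphism. Hence $R$ is a GLR.

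For the forward direction I would adopt the notation set up before Lemma \ref{acr2}: since $A(R)$ is complete and atomic, fix an isomorphism $\varphi\colon A(R)\to A:=\prod_{x\in X}\L_{n_x}$, put $R_x=\varphi^{-1}(a_x)$, and recall from Lemma \ref{acr2}(3) that each $R_x$ is a unitary special primary ring. It then suffices to show that $R$ is the internal direct sum of the ideals $\{R_x\}_{x\in X}$. The key observation is that in $A$, whose lattice operations are computed coordinatewise because it is a product of finite (hence complete) chains, one has $\bigvee_{x\in X}a_x=1$ and $\bigvee_{y\neq x}a_y=a_x^\ast$. Because $\varphi$ is an isomorphism between complete MV-algebras it preserves arbitrary joins, and in $A(R)$ such joins are exactly sums of ideals; applying $\varphi^{-1}$ therefore gives $\sum_{x\in X}R_x=\varphi^{-1}(1)=R$ and $\sum_{y\neq x}R_y=\varphi^{-1}(a_x^\ast)=R_x^\ast$, the last equality using $\varphi(R_x^\ast)=\varphi(R_x)^\ast=a_x^\ast$.

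These two identities finish the argument. Combining $\sum_{y\neq x}R_y=R_x^\ast$ with Fact 2 yields $R_x\cap\sum_{y\neq x}R_y=R_x\cap R_x^\ast=0$, so the sum $\sum_{x}R_x$ is direct; together with $\sum_x R_x=R$ this exhibits $R$ as an internal direct sum of the $R_x$. Since the $R_x$ are ideals satisfying $R_xR_y\subseteq R_x\cap R_y=0$ for $x\neq y$, multiplication respects the decomposition, and hence $R\cong\bigoplus_{x\in X}R_x$ as rings, each summand being a unitary special primary ring. I expect the main obstacle to be the passage to infinite joins: one must verify that $\varphi$ preserves the supremum of the infinite family $\{a_y\}_{y\neq x}$ and that this supremum is genuinely $a_x^\ast$ rather than merely $\le a_x^\ast$, which is precisely where the completeness of $A(R)$ and the coordinatewise description of suprema in $A$ are indispensable.
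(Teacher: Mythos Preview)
Your proof is correct and follows essentially the same strategy as the paper: both establish the internal direct-sum decomposition $R=\bigoplus_{x\in X}R_x$ using the isomorphism $\varphi$ and Lemma~\ref{acr2}. The only minor difference is that the paper verifies $\sum_x R_x=R$ by showing $\bigl(\sum_x R_x\bigr)^\ast=0$ via an elementwise argument and obtains directness from the infinite distributive law $a\wedge\bigvee_i a_i=\bigvee_i(a\wedge a_i)$ together with Fact~3, whereas you pull both facts back directly through $\varphi^{-1}$ using that an MV-isomorphism is an order isomorphism and hence preserves all existing joins.
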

\begin{proof}
We know that any unitary special primary ring is a GLR (Proposition \ref{acr}), and the direct sum of GLRs is again a GLR (Proposition \ref{oper}(3)). It remains to prove that every GLR is isomorphic to a direct sum of unitary special primary rings. We shall prove that if $R$ is a GLR, with the notations above 
 $$R\cong \displaystyle\bigoplus_{x\in X}R/R^\ast_x$$
Note that by Lemma \ref{acr2}, $R_x$ is a unitary special primary ring for all $x\in X$. Let $R'=\sum_{x\in X}R_x$, be the ideals sum. Then $R$ is an ideal of $R$. In addition since, the identity $a\wedge \bigvee_{i}a_i=\bigvee_i(a\wedge a_i)$ holds in any MV-algebra, then $R_y\cap \sum_{x\ne y}R_x=0$. Therefore, the ideal sum is direct. We claim that $R=R'$. To see this, we show that $R'^\ast=0$. So, let $t\in R'^\ast$, then $t\in R^\ast_x$ for all $x\in X$. Thus, $RtR\subseteq R^\ast_x$ and $\varphi(RtR)\leq a^\ast_x$ for all $x\in X$. Hence, $\varphi(RtR)(x)=0$ for all $x\in X$. Whence, $\varphi(RtR)=0$ and since $\varphi$ is an isomorphism, $RtR=0$ and $t=0$. Therefore, $R=R'$ and the isomorphism sought follows from Lemma \ref{acr2}.
\end{proof}
As a consequence of Theorem \ref{main}, we can strengthen Proposition \ref{unitary}.
\begin{cor}
Every GLR with finitely many ideals is isomorphic to a finite direct product of unitary special primary rings. In particular, any such ring is unitary.
\end{cor}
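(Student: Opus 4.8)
The plan is to read this off directly from Theorem \ref{main}. First I would apply Theorem \ref{main} to the GLR $R$ to obtain $R \cong \bigoplus_{x\in X} R/R^\ast_x$, where each summand $R/R^\ast_x$ is a unitary special primary ring; and from the setup preceding that theorem I would record the accompanying isomorphism $A(R) \cong \prod_{x\in X}\L_{n_x}$, with $X$ indexing the atoms of $A(R)$.

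Next I would translate the hypothesis. Saying that $R$ has finitely many ideals is exactly saying that the underlying set $\text{Id}(R)$ of the MV-algebra $A(R)$ is finite, hence that $\prod_{x\in X}\L_{n_x}$ is finite. Since each $x\in X$ corresponds to an atom of $A(R)$, every factor $\L_{n_x}$ is nontrivial, so $n_x \geq 2$ and $|A(R)| = \prod_{x\in X} n_x \geq 2^{|X|}$. Finiteness of $A(R)$ therefore forces $X$ to be a finite set.

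Once $X$ is finite, the direct sum $\bigoplus_{x\in X} R/R^\ast_x$ is literally the finite direct product $\prod_{x\in X} R/R^\ast_x$, which presents $R$ as a finite direct product of unitary special primary rings. For the last assertion I would note that a finite direct product of unitary rings is unitary, its identity being the tuple of the identities supplied by Lemma \ref{acr2}; hence $R$ is unitary. The single point needing any attention is the implication from finitely many ideals to finiteness of $X$, which rests on the elementary fact that each factor $\L_{n_x}$ has at least two elements; everything else is immediate from Theorem \ref{main} together with the collapse of a direct sum to a direct product over a finite index set.
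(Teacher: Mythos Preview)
Your proposal is correct and matches the paper's intent: the corollary is stated without proof, introduced only by the sentence ``As a consequence of Theorem~\ref{main}, we can strengthen Proposition~\ref{unitary},'' and your argument spells out exactly that consequence. The only minor remark is that you can shortcut the finiteness of $X$ even more directly by noting that $X$ indexes the atoms of the finite MV-algebra $A(R)$, but your $2^{|X|}$ bound works just as well.
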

\section{Final remarks}
MV-algebras have both commutative and non-commutative generalizations. One popular commutative generalization of MV-algebra is the notion of BL-algebra, that was introduced by H\'ajek \cite{H2}. Our future goal is to study commutative rings $R$ for which $A(R)$ is a BL-algebra (BL-rings). These rings are closely related to multiplication rings as studied by M. Griffin \cite{Grif}.

\end{document}